\documentclass[12pt]{amsart}
\usepackage{amsmath, amssymb, amsfonts}
\usepackage[all]{xypic}
\usepackage[pdftex]{graphicx}
\usepackage[usenames]{color}
\usepackage{stmaryrd}
\usepackage[utf8]{inputenc}


\paperheight=297mm
\paperwidth=210mm
\setlength{\oddsidemargin}{0pt}
\setlength{\evensidemargin}{0pt} \setlength{\headheight}{12pt}
\setlength{\footskip}{36pt}
\setlength{\hoffset}{0in}\setlength{\voffset}{-24pt}
\setlength{\topmargin}{0pt}
\setlength{\headsep}{12pt} \setlength{\marginparwidth}{0pt}
\setlength{\marginparpush}{0pt} \setlength{\textwidth}{210mm}
\addtolength{\textwidth}{-2in} \setlength{\textheight}{297mm}
\addtolength{\textheight}{-24pt}
\addtolength{\textheight}{-2in}


\theoremstyle{plain}
\newtheorem{theorem}{Theorem}[section]
\newtheorem{lemma}[theorem]{Lemma}
\newtheorem{corollary}[theorem]{Corollary}
\newtheorem{prop}[theorem]{Proposition}

\theoremstyle{remark}

\newtheorem{remark}[theorem]{Remark}

\newtheorem{example}[theorem]{Example}
\newtheorem{examples}[theorem]{Examples}
\newtheorem*{note*}{Note}
\newtheorem*{remark*}{Remark}
\newtheorem*{example*}{Example}

\theoremstyle{definition}
\newtheorem*{definition*}{Definition}
\newtheorem*{hypothesis*}{Hypothesis}
\newtheorem*{assumptions*}{Assumptions}
\newtheorem{definition}[theorem]{Definition}


\newcommand{\Z}{\mathbb{Z}}
\newcommand{\R}{\mathbb{R}}
\newcommand{\Q}{\mathbb{Q}}

\newcommand{\N}{\mathbb{N}}

\newcommand{\Gal}{\mathrm{Gal}}
\newcommand{\Tr}{\mathrm{Tr}}

\newcommand{\cl}{\mathrm{cl}}

\newcommand{\Hom}{\mathrm{Hom}}

\hyphenation{co-ho-mo-lo-gi-cal-ly}

\numberwithin{equation}{section}

\title[{On formal groups and Tate cohomology in local fields}]{On formal groups and\\ Tate cohomology in local fields}

\author{Nils Ellerbrock}
\address{Universit\"{a}t Bielefeld\\
	Fakult\"{a}t f\"{u}r Mathematik\\
	Postfach 100131\\
	Universit\"{a}tsstr. 25\\
	33501 Bielefeld\\
	Germany
}
\email{nils.ellerbrock@uni-bielefeld.de}

\author{Andreas Nickel}
\address{
Universit\"{a}t Bielefeld\\
Fakult\"{a}t f\"{u}r Mathematik\\
Postfach 100131\\
Universit\"{a}tsstr. 25\\
33501 Bielefeld\\
Germany}
\email{anickel3@math.uni-bielefeld.de}
\urladdr{http://www.math.uni-bielefeld.de/$\sim$anickel3/english.html}

\subjclass[2010]{14L05, 20J06, 12B25, 11G07}
\keywords{formal groups, principal units, Tate cohomology, local fields, elliptic curves, ray class groups}
\date{Version of 6th December 2016}
\begin{document}

\maketitle

\begin{abstract}
Let $L/K$ be a Galois extension of local fields of characteristic $0$
with Galois group $G$.
If $\mathcal{F}$ is a formal group over the ring of integers in $K$,
one can associate to $\mathcal F$ and 
each positive integer $n$ a $G$-module $F_L^n$ which as a set is
the $n$-th power of the maximal ideal of the ring of integers in $L$.
We give explicit necessary and sufficient conditions under which 
$F_L^n$ is a cohomologically trivial $G$-module.
This has applications to elliptic curves over local fields
and to ray class groups of number fields.
\end{abstract}

\section{Introduction}

Let $L/K$ be a Galois extension of local fields with Galois group $G$
and residue characteristic $p$.
We denote the ring of integers in $K$ and $L$ by
$\mathcal{O}_K$ and $\mathcal{O}_L$, respectively.
Let $\mathfrak P_L$ be the maximal ideal in $\mathcal{O}_L$
and let $n$ be a positive integer. Then $\mathfrak P_L^n$ is an
$\mathcal{O}_K[G]$-module in a natural way.
Köck \cite{MR2089083} has shown that $\mathfrak P_L^n$ is a projective
$\mathcal{O}_K[G]$-module if and only if $L/K$ is at most weakly ramified and
$n \equiv 1 \mod g_1$, where $g_1$ denotes the cardinality of the first
ramification group (which is the unique $p$-Sylow subgroup of
the inertia subgroup of $G$). As $\mathfrak P_L^n$ is torsionfree
as an $\mathcal{O}_K$-module,
it is $\mathcal{O}_K[G]$-projective if and only if 
it is a cohomologically trivial
$G$-module.

Now suppose that the local fields $L$ and $K$ are of characteristic $0$.
Then for sufficiently large $n$, the $p$-adic logarithm induces
$\Z_p[G]$-isomorphisms $U_L^n \simeq \mathfrak P_L^n$,
where $U_L^n := 1 + \mathfrak P_L^n$ are the principal units of level $n$.
It follows that the $\Z_p[G]$-module $U_L^n$ is cohomologically trival under
the same conditions on $L/K$ and $n$, at least if $n$ is sufficiently large.
Now it is very natural to ask whether this is still true for small $n$
and maybe as well for local fields of positive characteristic.
However, it is well known (and reproved in \S \ref{subsec:tame})
that $U_L^1$ is cohomologically trivial if and
only if $L/K$ is at most \emph{tamely} ramified.
Nevertheless, we give an affirmative answer to this question in \S 
\ref{sec:principal} whenever $n>1$. The proof is rather 
elementary and requires only some basic knowledge on Tate
cohomology and local class field theory.

Now let $\mathcal{F}$ be a formal group over $\mathcal{O}_K$
with formal group law $F$, where we again assume that the
characteristic of $K$ is $0$. Then for each positive integer $n$ one can
define a $\Z_p[G]$-module $F_L^n = \mathcal{F}(\mathfrak P_L^n)$ 
which as a set equals $\mathfrak P_L^n$,
but where addition is defined via $F$. Let $\mathbb{G}_a$ and $\mathbb{G}_m$
be the additive and the multiplicative formal group, respectively.
Then we have $\mathbb{G}_a(\mathfrak P_L^n) = \mathfrak P_L^n$
and $\mathbb{G}_m(\mathfrak P_L^n) \simeq U_L^n$.
This leads to the question whether $F_L^n$ is cohomologically trivial
under the same conditions on $L/K$ and $n$ as above. We again give an
affirmative answer whenever $n>1$ in \S \ref{sec:formal}.
Moreover, we show that $F_L^1$ is cohomologically trivial whenever
$L/K$ is tamely ramified.
Here, we build on results of Hazewinkel \cite{MR0349692}
on norm maps of formal groups.

Finally, we give two applications in \S \ref{sec:apps}.
First, we consider elliptic curves $E/K$. Then $E$ defines a formal
group over the ring of integers in $K$ such that we may apply
our main result of \S \ref{sec:formal}.
In particular, we deduce a generalization of the following classical
result of Mazur \cite{MR0444670}: When $E$ has good reduction and
$L/K$ is unramified,
then the norm map $E(L) \rightarrow E(K)$ is surjective.
In fact, our approach gives rise to similar results
when $E/K$ has additive or split multiplicative reduction.
Second, we consider finite Galois CM-extensions $L/K$
of number fields. Generalizing a result of the second author
\cite{MR2805422} we show that the minus $p$-part of certain
ray class groups is cohomologically trivial whenever
$L/K$ is weakly ramified above a fixed prime $p$. 
When $L/K$ is tamely ramified,
such a result was essential in the proof of the $p$-minus
part of the equivariant Tamagawa number conjecture for certain Tate motives
\cite{MR2805422, MR3552493}. We therefore believe that our result might be
useful in this direction as well.

\subsection*{Acknowledgements}
The authors acknowledge financial support provided by the DFG 
within the Collaborative Research Center 701
`Spectral Structures and Topological Methods in Mathematics'.

\subsection*{Notation and conventions}
All rings are assumed to have an identity element and all modules are assumed
to be left modules unless otherwise stated.

\section{Cohomology of principal units} \label{sec:principal}

\subsection{Tate cohomology}
Let $G$ be a finite group and let $M$ be a $\Z[G]$-module.
We denote by $M^G$ and $M_G$ the maximal submodule and maximal quotient
of $M$ upon which the action of $G$ is trivial, respectively.
For an integer $q$ we write $H^q(G,M)$ for the $q$-th Tate cohomology
group of $G$ with coefficients in $M$. We recall that for $q>0$
Tate cohomology coincides with usual group cohomology and that for
$q<-1$ we have $H^q(G,M) = H_{-q-1}(G,M)$, where the right hand side
denotes group homology of $G$ in degree $(-q-1)$.
Moreover, we have $H^0(G,M) = M^G / N_G(M)$, where 
$N_G := \sum_{\sigma \in G} \sigma \in \Z[G]$ and $N_G(M)$
denotes the image of the map 
\begin{eqnarray*}
N_G = N_{G,M}: M & \longrightarrow & M \\
m & \mapsto & N_G \cdot m. 
\end{eqnarray*}
Finally, we let $\Delta(G)$ be the kernel of the natural augmentation map
$\Z[G] \rightarrow \Z$ which sends each $\sigma \in G$ to $1$.
Then we have an equality $H^{-1}(G,M) = \ker(N_{G,M}) / \Delta(G) M$.

\begin{definition}
	Let $G$ be a finite group and let $M$ be a $\Z[G]$-module.
	Then $M$ is called \emph{cohomologically trivial} if
	$H^q(U,M) = 0$ for all $q \in \Z$ and all subgroups $U$ of $G$.
\end{definition}

\begin{remark}
	We note that $H^0(G,M)$ vanishes if and only if the norm map
	$N_{G,M}: M \rightarrow M^G$ is surjective.
\end{remark}


We recall the following observation (see K\"ock 
\cite[Proof of Lemma 1.4]{MR2089083}).

\begin{lemma} \label{lem:spectral-argument}
	Let $N$ be a normal subgroup of $G$ and let $M$ be a $\Z[G]$-module.
	Suppose that $H^i(G/N,M^N) = 0$ and $H^i(N,M) = 0$ for all $i \in \Z$.
	Then $H^i(G,M) = 0$ for all $i \in \Z$.	
\end{lemma}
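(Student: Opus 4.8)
The plan is to establish the vanishing first in positive degrees, using the Lyndon--Hochschild--Serre spectral sequence, and then to propagate it to all degrees by a dimension-shifting argument. For $i \geq 1$, Tate cohomology agrees with ordinary group cohomology, so I would invoke the spectral sequence
\[
H^p(G/N, H^q(N,M)) \;\Longrightarrow\; H^{p+q}(G,M).
\]
By hypothesis $H^q(N,M) = 0$ for every $q \geq 1$, so it collapses onto the bottom row and gives isomorphisms $H^i(G,M) \cong H^i(G/N, M^N)$ for all $i \geq 1$; the right-hand side vanishes, again by hypothesis. (Alternatively one can combine the inflation--restriction sequence with a downward dimension shift, but invoking the spectral sequence is quickest.) This settles all degrees $i \geq 1$.

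To reach the remaining degrees I would dimension-shift upwards. Let $M_0$ denote the abelian group $M$ with the trivial $G$-action, let $\widetilde{M} := \Z[G] \otimes_{\Z} M_0$ with $G$ acting on the first factor, and let $\epsilon \colon \widetilde{M} \to M$, $g \otimes m \mapsto g \cdot m$, which is a $G$-equivariant surjection; set $M'' := \ker(\epsilon)$. As $\widetilde{M}$ is an induced module, it is cohomologically trivial over $G$ and over $N$, so the short exact sequence $0 \to M'' \to \widetilde{M} \to M \to 0$ yields isomorphisms $H^i(G,M) \cong H^{i+1}(G,M'')$ and $H^i(N,M) \cong H^{i+1}(N,M'')$ for all $i \in \Z$; in particular $H^i(N,M'') = 0$ for all $i$. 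The key point is that $M''$ again satisfies the second hypothesis. Taking $N$-invariants identifies $\widetilde{M}^N$ with $\Z[G/N] \otimes_{\Z} M_0$, an induced $\Z[G/N]$-module, hence cohomologically trivial over $G/N$; and since $H^1(N,M'') \cong H^0(N,M) = 0$, the sequence $0 \to M'' \to \widetilde{M} \to M \to 0$ stays exact after applying $(-)^N$, giving a short exact sequence $0 \to (M'')^N \to \widetilde{M}^N \to M^N \to 0$ of $\Z[G/N]$-modules. Hence $H^i(G/N, (M'')^N) \cong H^{i-1}(G/N, M^N) = 0$ for all $i$, so $M''$ satisfies both hypotheses of the lemma.

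Finally I would assemble the argument by iteration. Put $M^{[0]} := M$ and $M^{[j+1]} := (M^{[j]})''$; by the previous paragraph and induction every $M^{[k]}$ satisfies the hypotheses, and $H^i(G,M) \cong H^{i+k}(G,M^{[k]})$ for all $i \in \Z$ and all $k \geq 0$. Given an arbitrary $i \in \Z$, choose $k \geq 0$ with $i + k \geq 1$; the first step applied to $M^{[k]}$ then gives $H^{i+k}(G,M^{[k]}) = 0$, whence $H^i(G,M) = 0$. I expect the only real subtlety to be the passage to the degrees $i = 0$ and $i = -1$, which the Lyndon--Hochschild--Serre spectral sequence does not detect directly; the dimension-shifting detour exists precisely to cover these, and it succeeds because the hypotheses descend to $M''$ --- the crux being the identification of $\widetilde{M}^N$ with an induced $\Z[G/N]$-module. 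The remaining manipulations of long exact sequences are routine.
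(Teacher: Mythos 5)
Your proof is correct, and its first step is exactly the paper's proof: the paper disposes of the lemma in one line by citing the Hochschild--Serre spectral sequence $H^p(G/N,H^q(N,M))\Rightarrow H^{p+q}(G,M)$, which, as you note, literally only controls the ordinary cohomology groups, i.e.\ the Tate groups in degrees $i\geq 1$. Where you go beyond the paper is in making explicit how to reach the remaining Tate degrees $i\leq 0$: you dimension-shift via $0\to M''\to \Z[G]\otimes_{\Z}M_0\to M\to 0$, check that the hypotheses are inherited by $M''$ (the key points being the identification $(\Z[G]\otimes_{\Z}M_0)^N\simeq \Z[G/N]\otimes_{\Z}M_0$ as $\Z[G/N]$-modules, which uses normality of $N$, and the vanishing of $H^1(N,M'')\simeq H^0(N,M)$ to keep the sequence exact after taking $N$-invariants), and then iterate until the degree in question becomes positive. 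All of these steps are sound. So your argument is not a different method so much as a complete version of the paper's citation-style proof: the paper (following K\"ock) leaves the passage from positive degrees to all Tate degrees implicit, while your write-up supplies it; the cost is the extra bookkeeping of the shifted modules $M^{[k]}$, the benefit is that the crucial degrees $i=0,-1$ (which are precisely the ones used later, e.g.\ for norm-map surjectivity) are covered without appealing to any Tate-cohomology version of the spectral sequence.
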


\begin{proof}
	This follows from the Hochschild--Serre spectral sequence
	\[
		H^p(G/N, H^q(N,M)) \implies H^{p+q}(G,M).
	\]
\end{proof}

Now suppose that $G$ is cyclic. Then for any $i \in \Z$
one has isomorphisms $H^i(G,M) \simeq H^{i+2}(G,M)$
by \cite[Proposition 1.7.1]{MR2392026}, and we let
\[
	h(M) := \frac{|H^0(G,M)|}{|H^1(G,M)|}
		= \frac{|H^{2r}(G,M)|}{|H^{2r+1}(G,M)|}, \quad r \in \Z
\]
be the \emph{Herbrand quotient} of $M$ (whenever the quotient on
the right hand side is well defined). The Herbrand quotient
is multiplicative on short exact sequences of $\Z[G]$-modules
(see \cite[Proposition 1.7.5]{MR2392026}).

\subsection{Principal units}
If $L$ is a local field, we denote the ring of integers in $L$
by $\mathcal{O}_L$. We note that $\mathcal{O}_L$ is a complete
discrete valuation ring and we let $v_L$ be the corresponding 
normalized valuation.
We put $\mathfrak P_L := \left\{x \in \mathcal{O}_L \mid v_L(x)> 0 \right\}$
which is the unique maximal ideal in $\mathcal{O}_L$.
The residue field $\lambda := \mathcal{O}_L / \mathfrak{P}_L$ is a finite
field of characteristic $p := \mathrm{char}(\lambda)>0$.
We let $U_L := \mathcal{O}_L^{\times}$ be the group of units in $L$.

\begin{definition}
	For each $n \in \N$ we put $U_L^n := 1 + \mathfrak{P}_L^n$
	and call $U_L^n$ the \emph{group of principal units of level $n$}.
\end{definition}

Each $U_L^n$ is a subgroup of $U_L$ of finite index. More precisely,
one has (non-canonical) isomorphisms
\begin{eqnarray}
U_L / U_L^1 & \simeq & \lambda^{\times} \label{eqn:first-nc} \\
U_L^n / U_L^{n+1} & \simeq & \lambda  \label{eqn:second-nc}
\end{eqnarray}
for all $n \in \N$.

\subsection{Ramification groups}
Let $L/K$ be a finite Galois extension of local fields with
Galois group $G$. We denote the residue field of $K$ by $\kappa$
and put $f := [\lambda: \kappa]$. Let $I$ be the inertia subgroup of $G$
and $e := |I|$ the ramification index. Then $G/I$ naturally identifies
with $\Gal(\lambda / \kappa)$ and we have $[L:K] = |G| = e \cdot f$.

\begin{definition}
	Let $i \geq -1$. Then we call
	\[
		G_i := \left\{\sigma \in G \mid v_L(\sigma(x) - x) \geq i+1
			\, \forall x \in \mathcal{O}_L \right\}
	\]
	the \emph{$i$-th ramification group} of the extension $L/K$.
	We let $g_i$ be the cardinality of $G_i$.
\end{definition}

We note that the ramification groups form a descending chain of 
normal subgroups of $G$ with abelian quotients 
(and thus the extension is solvable).
One has $G_{-1} = G$, $G_0 = I$ and $G_1$ is the (unique) $p$-Sylow
subgroup of $I$. We recall that the extension $L/K$ is said to be
\emph{unramified} if $G_0 = 1$, \emph{tamely ramified} if $G_1 = 1$ and
\emph{weakly ramified} if $G_2 = 1$.

If $H$ is a subgroup of $G$, we obviously have
$H_i  = G_i \cap H$. 
We define 
\begin{eqnarray*}
\phi = \phi_G: \left[-1, \infty \right) & \longrightarrow & 
	\left[ -1, \infty \right) \\
s & \mapsto & \int_{0}^{s} [G_0:G_t]^{-1} dt,
\end{eqnarray*}
where $[G_0:G_t] := [G_t:G_0]^{-1}$ if $t<0$. 
Then $G_i H / H = (G/H)_{\phi_H(i)}$ for every normal subgroup $H$ of $G$.
The map $\phi$
is piecewise linear and strictly increasing. We let
$\psi := \phi^{-1}$ be its inverse. For any $s \geq -1$ 
we then have the two inequalities
\begin{equation} \label{eqn:phi-psi-inequalities}
	\phi(s) \leq s, \quad s \leq \psi(s).
\end{equation}
We also recall from \cite[Chapter IV, \S 3]{MR554237} that
for $s \geq 0$ we have the formula
\begin{equation} \label{eqn:phi-formula}
\phi(s) = \frac{1}{g_0} \left((\sum_{i=1}^{\lfloor s \rfloor} g_i)
	+ (s - \lfloor s \rfloor) g_{\lceil s \rceil}\right).
\end{equation}
Here, we write $\lfloor s \rfloor$ for the largest integer  which
is less or equal to $s$, and $\lceil s \rceil$ for the least integer
which is greater or equal to $s$. 
Finally, we will frequently use the fact that $\psi(n)$
is an integer whenever $n \geq -1$ is an integer
\cite[Chapter IV, \S 3, Proposition 13]{MR554237}.

\subsection{Statement of the main result}
The main result of this section is the following theorem.

\begin{theorem} \label{thm:units-ct}
	Let $L/K$ be a finite Galois extension of local fields 
	with Galois group $G$. Let $n>1$
	be an integer. Then the $G$-module $U_L^n$ is
	cohomologically trivial if and only if 
	$L/K$ is at most weakly ramified
	and $n \equiv 1 \mod g_1$. Moreover, the $G$-module
	$U_L^1$ is cohomologically trivial if and only if 
	$L/K$ is at most tamely ramified.
\end{theorem}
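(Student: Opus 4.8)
The plan is to reduce to cyclic groups via Lemma \ref{lem:spectral-argument} and then compute Herbrand quotients together with the vanishing of a single cohomology group, using local class field theory. More precisely, recall that a $G$-module which is torsionfree over $\Z_p$ (as $U_L^n$ is, being a pro-$p$ group with no torsion) is cohomologically trivial if and only if $H^q(U,M)=0$ for all subgroups $U$ of $G$ and two consecutive values of $q$; and by a theorem of Nakayama it suffices to check this on all subgroups of prime order. So the first step is to reduce to the case where $G$ itself is cyclic of prime order $\ell$. If $\ell\neq p$, then $U_L^n$ is a $\Z_p$-module on which a group of order prime to $p$ acts, hence cohomologically trivial for trivial reasons; so the only interesting case is $G$ cyclic of order $p$.

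So assume $G=\langle\sigma\rangle$ has order $p$. I would first treat the unramified and tamely ramified cases separately as warm-up (the tame case being classical and the source of the ``$U_L^1$'' statement): when $L/K$ is unramified, $U_L^n\simeq\mathcal O_L^{\text{(something)}}$ is induced/cohomologically trivial by the normal basis theorem applied to the graded pieces \eqref{eqn:second-nc}; when $L/K$ is tamely ramified of degree $p$ (so $p\nmid e$ forces $e=1$, back to unramified) — wait, tame of degree $p$ with $G_1=1$ means $p\nmid g_0$, so $G=G_0$ forces $g_0=1$: the only tame case of degree $p$ is unramified. Hence for $G$ of order $p$ the extension is either unramified or wildly ramified, and in the wild case $G=G_0$ with $g_1=\dots=g_m=p$ for some $m\geq 1$ and $g_{m+1}=1$; weak ramification means $m=1$.

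The heart of the argument is the wildly ramified case. Here I would compute the Herbrand quotient $h(U_L^n)$. Using the non-canonical isomorphisms \eqref{eqn:second-nc} and the fact that $U_L^n/U_L^{n+1}\simeq\lambda$ is a cohomologically trivial $G$-module precisely when... no — rather, one filters $U_L^n$ by the $U_L^{n+j}$ and uses that the Herbrand quotient of each graded piece $\lambda$ is $1$ (since $\lambda$ has the same order as $\kappa^{\oplus f}$... actually $|H^0(G,\lambda)|=|H^1(G,\lambda)|$ because $\lambda$ is an $\F_p[G]$-module of dimension $f$ and for $G$ of order $p$ one has $h=1$ on all finite modules). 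Passing to the limit, $h(U_L^n)=1$, which already gives $|H^0(G,U_L^n)|=|H^1(G,U_L^n)|$. It then remains to decide when these orders are $1$. By local class field theory, $H^0(G,U_L^n)=U_K\cap N_{L/K}(L^\times)\,/\,N_{L/K}(U_L^n)$, and one has the explicit conductor–discriminant / Hasse–Arf description: $N_{L/K}(U_L^i)=U_K^{\psi(i)}\cap N_{L/K}(U_L)$ in suitable ranges, where $\psi=\psi_{L/K}$. Combining this with the fact that $H^0(G,U_L^n)=0$ iff the norm map $U_L^n\to U_K$ hits everything it should, I would pin down the vanishing condition to $\psi(n)=\psi(n-1)+1$, equivalently (by \eqref{eqn:phi-formula} and integrality of $\psi$ at integers) to $g_1\mid n-1$ together with $m\leq 1$, i.e. weak ramification and $n\equiv 1\bmod g_1$; and for $n=1$ the condition $\psi(1)=\psi(0)+1=1$ forces $g_1=1$, i.e. tameness.

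The main obstacle I anticipate is the bookkeeping in the wild case: correctly identifying the image $N_{L/K}(U_L^n)$ inside $U_K$ in terms of the ramification filtration on $G^{\mathrm{ab}}$ and the function $\psi$, and checking that the general (non-prime-order, non-cyclic) statement really does follow from the prime-order cyclic computation via Lemma \ref{lem:spectral-argument} — this needs care because the hypotheses ``at most weakly ramified'' and ``$n\equiv 1\bmod g_1$'' must be shown to pass correctly to subquotients $G/H$ (using $G_iH/H=(G/H)_{\phi_H(i)}$ and that $\phi_H$ sends the relevant jump appropriately) and to subgroups $H$ (using $H_i=G_i\cap H$, so that $g_1(H)\mid g_1(G)$ and the congruence is preserved). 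The converse direction — that cohomological triviality forces weak ramification and the congruence — should come out of the same Herbrand-quotient/norm computation run in reverse, localizing at a subgroup of order $p$ contained in $G_1$ when $G_1\neq 1$.
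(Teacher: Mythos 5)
Your overall architecture (reduce to cyclic of order $p$, treat tame/unramified separately, then in the wild case combine a Herbrand quotient computation with vanishing of $H^0$ via the ramification filtration, and run the norm computation in reverse for the converse) is the same as the paper's, but the two load-bearing steps are not correct as you state them. First, the reduction: it is \emph{not} true that cohomological triviality of a torsion-free module can be checked on subgroups of prime order. Nakayama's criterion detects cohomological triviality on \emph{Sylow} subgroups (vanishing in two consecutive degrees for each Sylow subgroup); for elementary abelian groups of rank $\geq 2$ there are modules (Carlson-type modules, or lattices mapping onto them) that are cohomologically trivial over every subgroup of order $p$ but not over the whole group, and this is exactly the relevant situation here, since in a weakly ramified extension $G_1$ is elementary abelian and can have rank $>1$. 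The reduction that actually works is the one you relegate to the ``needs care'' remark: induction along a normal series via Lemma \ref{lem:spectral-argument}, using solvability together with the invariants formula $(U_L^n)^N = U_{L^N}^{1+\lfloor (n-1)/e_N\rfloor}$ (Corollary \ref{cor:units-invariants}) and the fact that the hypotheses pass to $L/L^N$ and $L^N/K$; that is the paper's route, and it cannot be replaced by your prime-order criterion. The same issue recurs in the converse: restricting to a subgroup of order $p$ inside $G_1$ only yields $n\equiv 1\bmod p$, not $n\equiv 1\bmod g_1$; the paper instead restricts to all of $G_1$ (of order $p^k$) and its norm-inclusion argument there does not require cyclicity.

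Second, the central computations are flawed as written. The Herbrand quotient is not computable by filtering $U_L^n$ by the $U_L^{n+j}$ and ``passing to the limit'': the graded pieces $\lambda$ all have $h=1$, yet for example $\Z_p$ with trivial $\Z/p\Z$-action has the same kind of filtration and $h(\Z_p)=p$. The correct argument (Corollary \ref{cor:Herbrand}) uses that $U_L^n$ has finite index in $U_L$, so $h(U_L^n)=h(U_L)$, and computes $h(U_L)=1$ from Hilbert 90, the valuation sequence and local class field theory. Likewise the norm bookkeeping is backwards and the resulting criterion is wrong: $H^0(G,U_L^n)$ is $(U_L^n)^G/N_G(U_L^n)=U_K^{1+\lfloor (n-1)/e\rfloor}/N_G(U_L^n)$, not $U_K\cap N_{L/K}(L^\times)/N_{L/K}(U_L^n)$; the norm of $U_L^{\psi(m)}$ sits at level $m$ (so the image of level $i$ is governed by $\lceil\phi(i)\rceil$, not $\psi(i)$); and the condition $\psi(n)=\psi(n-1)+1$ is not equivalent to weak ramification plus $g_1\mid n-1$ --- for $n-1\geq t$ one has $\psi(n)-\psi(n-1)=g_1>1$, so your criterion never holds in the range where it matters. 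The paper's actual conditions come from comparing $U_K^{1+\lfloor(n-1)/e\rfloor}$ with $N_G(U_L^n)=U_K^{\lceil\phi(n)\rceil}$ (for the sufficiency), and, for the necessity, from the chain $N_G(U_L^n)\subseteq N_G(U_L^{\psi(m)+1})\subseteq U_K^{m+1}$ with $m=\lfloor\phi(n-1)\rfloor$ together with \eqref{eqn:phi-formula}; these steps would have to replace your versions for the proof to go through.
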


The remaining part of this section is devoted to the proof
of Theorem \ref{thm:units-ct}.

\subsection{Galois invariants of principal units}
We first recall the following result on Galois invariants of ideals in $L$.

\begin{lemma} \label{lem:Koeck}
	Let $L/K$ be a finite Galois extension of local fields and let
	$n$ be an integer. Then we have an equality
	\[
		(\mathfrak{P}_L^n)^G = \mathfrak{P}_K^{1 + \lfloor \frac{n-1}{e} \rfloor}.
	\]
\end{lemma}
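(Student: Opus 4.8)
The plan is to compute the fixed module $(\mathfrak{P}_L^n)^G$ directly using the normalized valuations $v_L$ on $L$ and $v_K$ on $K$. Recall that for $x \in K$ one has $v_L(x) = e \cdot v_K(x)$, where $e = e(L/K)$ is the ramification index, and that $\mathfrak{P}_K^m = \{x \in K : v_K(x) \ge m\}$. First I would observe that $(\mathfrak{P}_L^n)^G = \mathfrak{P}_L^n \cap K$: an element of $L$ is fixed by all of $G$ precisely when it lies in $K$, and the $G$-action on $\mathfrak{P}_L^n$ is the restriction of the action on $L$. So the task reduces to identifying which elements of $K$ lie in $\mathfrak{P}_L^n$, i.e. satisfy $v_L(x) \ge n$.

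The key step is the elementary valuation bookkeeping. For $x \in K^\times$ we have $v_L(x) = e\, v_K(x)$, an integer multiple of $e$. Thus $v_L(x) \ge n$ holds if and only if $e\, v_K(x) \ge n$, equivalently $v_K(x) \ge n/e$, and since $v_K(x)$ is an integer this is equivalent to $v_K(x) \ge \lceil n/e \rceil$. It then remains to check the arithmetic identity $\lceil n/e \rceil = 1 + \lfloor \frac{n-1}{e}\rfloor$ for all integers $n$ (and $e \ge 1$); this is a standard fact, following e.g. by writing $n - 1 = qe + r$ with $0 \le r < e$ and treating the cases $r = 0$ (where $n = qe+1$ and both sides equal $q+1$, except one must be slightly careful when $n \le 0$, but the statement is used only for positive $n$) and $r > 0$ separately. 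Hence $\mathfrak{P}_L^n \cap K = \{x \in K : v_K(x) \ge 1 + \lfloor \frac{n-1}{e}\rfloor\} = \mathfrak{P}_K^{1 + \lfloor (n-1)/e \rfloor}$, which is the claimed equality.

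I do not expect any genuine obstacle here: the only mildly delicate point is the ceiling-versus-floor identity and making sure the conventions match (in particular that $\mathfrak{P}_L^n$ for $n \le 0$ is interpreted as a fractional ideal of $L$ if the lemma is to be applied in that range, though as stated it is only needed for $n \in \N$). One could alternatively cite this directly from K\"ock \cite[Lemma 1.1 or thereabouts]{MR2089083}, since the excerpt already attributes the surrounding circle of ideas to that paper; but the self-contained valuation argument above is short enough to include.
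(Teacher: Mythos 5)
Your argument is correct: since $L/K$ is Galois, $(\mathfrak{P}_L^n)^G=\mathfrak{P}_L^n\cap K$, and for $x\in K^\times$ the relation $v_L(x)=e\,v_K(x)$ together with the identity $\lceil n/e\rceil = 1+\lfloor (n-1)/e\rfloor$ (valid for every integer $n$ and $e\geq 1$, so your caution about $n\leq 0$ is unnecessary --- the same division-with-remainder computation goes through) gives exactly the stated equality, also in the fractional-ideal range. The only difference from the paper is that the paper gives no argument at all: it simply cites K\"ock \cite[Lemma 1.4 (a)]{MR2089083}, whose proof is essentially the valuation bookkeeping you wrote out. So your version buys self-containedness at the cost of a few lines, while the paper's citation defers to the reference; both are perfectly adequate here.
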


\begin{proof}
	This is \cite[Lemma 1.4 (a)]{MR2089083}.
\end{proof}

\begin{corollary} \label{cor:units-invariants}
	Let $L/K$ be a finite Galois extension of local fields and let
	$n \geq 1$ be an integer. Then we have an equality
	\[
	(U_L^n)^G = U_K^{1 + \lfloor \frac{n-1}{e} \rfloor}.
	\]
\end{corollary}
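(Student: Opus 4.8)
The plan is to deduce Corollary~\ref{cor:units-invariants} from Lemma~\ref{lem:Koeck} by translating the statement about the ideal $\mathfrak{P}_L^n$ into one about the group of principal units $U_L^n = 1 + \mathfrak{P}_L^n$. The key observation is that the bijection $x \mapsto 1+x$ identifies $\mathfrak{P}_L^n$ with $U_L^n$ as sets, and this bijection is $G$-equivariant: for $\sigma \in G$ and $x \in \mathfrak{P}_L^n$ we have $\sigma(1+x) = 1 + \sigma(x)$, since $\sigma$ fixes $1$ and permutes $\mathfrak{P}_L^n$. Therefore $u = 1+x \in U_L^n$ is fixed by all of $G$ if and only if $x \in \mathfrak{P}_L^n$ is fixed by all of $G$, i.e. the bijection restricts to a bijection $(U_L^n)^G \leftrightarrow (\mathfrak{P}_L^n)^G$.

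First I would note that $(\mathfrak{P}_L^n)^G = \mathfrak{P}_L^n \cap K = \mathfrak{P}_L^n \cap \mathcal{O}_K$, since the $G$-invariant elements of $L$ are exactly those in $K$. By Lemma~\ref{lem:Koeck} this equals $\mathfrak{P}_K^{1+\lfloor (n-1)/e \rfloor}$. Set $m := 1 + \lfloor \tfrac{n-1}{e} \rfloor$; since $n \geq 1$ we have $m \geq 1$, so $\mathfrak{P}_K^m \subseteq \mathfrak{P}_K$ and hence $1 + \mathfrak{P}_K^m = U_K^m$ makes sense as a subgroup of the units $U_K$. Applying the $G$-equivariant bijection $x \mapsto 1+x$ to the equality $(\mathfrak{P}_L^n)^G = \mathfrak{P}_K^m$ yields
\[
(U_L^n)^G = 1 + (\mathfrak{P}_L^n)^G = 1 + \mathfrak{P}_K^m = U_K^m = U_K^{1 + \lfloor \frac{n-1}{e}\rfloor},
\]
which is exactly the claimed identity. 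One should also check that $U_K^m$ is indeed the full subgroup of $G$-invariants inside $U_L^n$ and not merely a subgroup: this is immediate because $(U_L^n)^G = U_L^n \cap (L^\times)^G = U_L^n \cap K^\times$, and $1+x \in K^\times$ with $x \in \mathfrak{P}_L^n$ forces $x \in K \cap \mathfrak{P}_L^n = \mathfrak{P}_K^m$.

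There is essentially no obstacle here — the proof is a one-line transport of structure along the pointed $G$-bijection $\mathfrak{P}_L^n \xrightarrow{\sim} U_L^n$, $x \mapsto 1+x$. The only point that deserves a word of care is ensuring that this map interacts correctly with the two different group laws (addition on $\mathfrak{P}_L^n$, multiplication on $U_L^n$): it is \emph{not} a group homomorphism, but the computation of invariants only uses that it is a $G$-equivariant bijection of underlying sets, so this is harmless. Hence the corollary follows directly from Lemma~\ref{lem:Koeck}.
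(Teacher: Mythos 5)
Your proof is correct and is essentially the paper's argument: the paper also deduces the corollary directly from Lemma~\ref{lem:Koeck} via the identification $U_L^n = 1 + \mathfrak{P}_L^n$, which you have simply spelled out (correctly) as a $G$-equivariant bijection of sets transporting invariants.
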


\begin{proof}
	As $U_L^n = 1 + \mathfrak{P}_L^n$, this is immediate
	from Lemma \ref{lem:Koeck}.
\end{proof}

\subsection{Cohomology in totally ramified extensions}
In this section we calculate the Herbrand quotient of $U_L$
when $L/K$ is a totally ramified cyclic extension. We begin with 
the following easy lemma.

\begin{lemma} \label{lem:valuation-zero}
	Let $L/K$ be a totally ramified Galois extension of local fields.
	Then the map $H^0(G, L^{\times}) \rightarrow H^0(G,\Z)$ 
	induced by the valuation $v_L: L^{\times} \rightarrow \Z$ 
	is trivial.
\end{lemma}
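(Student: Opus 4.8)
The plan is to identify both Tate cohomology groups explicitly and then observe that the induced map vanishes for divisibility reasons. First I would note that $G$ acts trivially on $\Z$, since $v_L \circ \sigma = v_L$ for every $\sigma \in G$ (valuations are preserved by automorphisms), so that the valuation is indeed a map of $G$-modules and $H^0(G,\Z) = \Z^G/N_G(\Z) = \Z/|G|\Z$. On the other side, $(L^\times)^G = K^\times$ and $N_G$ applied to $L^\times$ is precisely the image of the field norm $N_{L/K}$, whence $H^0(G,L^\times) = K^\times/N_{L/K}(L^\times)$. Under these identifications the map induced by $v_L$ sends the class of $x \in K^\times$ to the residue class of $v_L(x)$ modulo $|G|$.

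The key point is then the elementary formula $v_L(x) = e(L/K)\,v_K(x)$ for $x \in K^\times$, where $e(L/K)$ denotes the ramification index of $L/K$. Since $L/K$ is totally ramified we have $e(L/K) = [L:K] = |G|$, so $v_L(x) \in |G|\Z$ and its class in $\Z/|G|\Z$ is zero. Hence the induced map on $H^0$ is trivial, as claimed. There is no genuine obstacle here: the only points requiring (routine) care are the standard identification of $H^0(G,L^\times)$ with $K^\times/N_{L/K}(L^\times)$ and the fact that $v_L$ is a morphism of $G$-modules, both of which are immediate from the definitions recalled in \S\ref{sec:principal}.
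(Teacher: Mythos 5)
Your proof is correct and follows essentially the same route as the paper: identify $H^0(G,L^{\times})$ with $K^{\times}/N_{L/K}(L^{\times})$ and $H^0(G,\Z)$ with $\Z/e\Z$ (since $e=|G|$ by total ramification), then note that $v_L(x)=e\,v_K(x)$ is divisible by $e$ for $x\in K^{\times}$. The extra verifications you mention (that $v_L$ is $G$-equivariant and the standard identification of $H^0$) are routine and consistent with the paper's argument.
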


\begin{proof}
	We have $H^0(G,L^{\times}) = K^{\times} / N_G(L^{\times})$ and,
	as $L/K$ is totally ramified, we have $H^0(G, \Z) = \Z/ e \Z$.
	However, $v_L(x)$ is divisible by $e$ for every $x \in K^{\times}$. 
\end{proof}

\begin{remark}
	In fact, the map $H^i(G, L^{\times}) \rightarrow H^i(G,\Z)$
	induced by the valuation is trivial for every $i \in \Z$
	(see \cite[Chapter XII, \S 1, Exercise 2]{MR554237}).
	However, we will not need this more general statement.
\end{remark}

\begin{corollary} \label{cor:Herbrand}
	Let $L/K$ be a totally ramified cyclic Galois extension of local fields.
	Let $d := [L:K]$ be its degree. Then we have isomorphisms
	\[
		 H^i(G, U_L) \simeq \Z/ d\Z
	\]
	for every $i \in \Z$. In particular, we have $h(U_L) = 1$.
\end{corollary}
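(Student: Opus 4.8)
The plan is to exploit the short exact sequence of $G$-modules
\[
	1 \longrightarrow U_L \longrightarrow L^{\times} \xrightarrow{v_L} \Z \longrightarrow 0,
\]
where $G$ acts trivially on $\Z$ (valuations being Galois-invariant), and to feed it into the long exact sequence of Tate cohomology. The point is that both outer terms are completely understood: Hilbert 90 gives $H^{1}(G,L^{\times}) = 0$, and local class field theory identifies $H^{0}(G,L^{\times}) = K^{\times}/N_G(L^{\times})$ with $G$, hence with $\Z/d\Z$; while for the trivial module one has the standard computation $H^{0}(G,\Z) \simeq \Z/d\Z$, $H^{1}(G,\Z) = 0$ and $H^{-1}(G,\Z) = 0$ (the last because $\Z$ is torsion-free). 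Since $G$ is cyclic, all of this propagates $2$-periodically, so it suffices to treat degrees $0$ and $1$.

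In the range of degrees from $-1$ to $1$ the long exact sequence reads
\[
	0 = H^{-1}(G,\Z) \to H^{0}(G,U_L) \to H^{0}(G,L^{\times}) \to H^{0}(G,\Z) \to H^{1}(G,U_L) \to H^{1}(G,L^{\times}) = 0.
\]
By Lemma \ref{lem:valuation-zero} the middle arrow $H^{0}(G,L^{\times}) \to H^{0}(G,\Z)$ induced by $v_L$ is the zero map. Hence the sequence splits into two short pieces and yields isomorphisms $H^{0}(G,U_L) \xrightarrow{\sim} H^{0}(G,L^{\times}) \simeq \Z/d\Z$ and $H^{0}(G,\Z) \xrightarrow{\sim} H^{1}(G,U_L)$, so also $H^{1}(G,U_L) \simeq \Z/d\Z$. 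Applying the periodicity isomorphism $H^{i}(G,M) \simeq H^{i+2}(G,M)$ for cyclic $G$ then gives $H^{i}(G,U_L) \simeq \Z/d\Z$ for every $i \in \Z$, and in particular $h(U_L) = |H^{0}(G,U_L)|/|H^{1}(G,U_L)| = d/d = 1$.

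The only input that is not entirely formal is the identification $H^{0}(G,L^{\times}) \simeq \Z/d\Z$, i.e. the equality of the norm index $[K^{\times} : N_G(L^{\times})]$ with $[L:K]$; this is the main substantive ingredient, supplied by local class field theory (alternatively, by the computation $h(L^{\times}) = [L:K]$ together with Hilbert 90, which already suffices for the Herbrand quotient assertion via multiplicativity). Everything else — the exact sequence, the vanishing of the valuation map on $H^0$ from Lemma \ref{lem:valuation-zero}, and the periodicity — is bookkeeping. It is worth noting that the hypothesis that $L/K$ is totally ramified enters precisely through Lemma \ref{lem:valuation-zero}; without it the map on $H^{0}$ need not vanish and the conclusion genuinely changes (for unramified extensions, for instance, $U_L$ is even cohomologically trivial).
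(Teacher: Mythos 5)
Your proof is correct and follows essentially the same route as the paper: the short exact sequence $1 \to U_L \to L^{\times} \to \Z \to 0$, Hilbert 90, the vanishing of the valuation map on $H^0$ (Lemma \ref{lem:valuation-zero}), the norm-index computation from local class field theory, and periodicity for cyclic $G$. The only cosmetic difference is that you verify $H^{-1}(G,\Z)=0$ directly from torsion-freeness, where the paper invokes periodicity and $\Hom(G,\Z)=0$; the substance is identical.
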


\begin{proof}
	We first observe that $H^{-1}(G, \Z) \simeq H^1(G, \Z) = \Hom(G, \Z) = 0$.
	As $H^1(G, L^{\times})$ vanishes by Hilbert's Theorem 90, 
	Lemma \ref{lem:valuation-zero} and
	the long exact sequence in (Tate) cohomology of the short exact sequence
	\[
		0 \longrightarrow U_L \longrightarrow L^{\times}
		\stackrel{v_L}{\longrightarrow} \Z \longrightarrow 0
	\]
	induces isomorphisms
	\[
		H^1(G, U_L) \simeq H^0(G, \Z) = \Z/d\Z
	\]
	and
	\[
		H^0(G, U_L) \simeq H^0(G,L^{\times}) \simeq H^{-2}(G, \Z) 
		\simeq G \simeq \Z / d \Z
	\]
	by local class field theory. As $H^i(G,U_L) \simeq H^{i+2}(G,U_L)$
	for every $i \in \Z$, we are done.
\end{proof}

\subsection{Tamely ramified extensions} \label{subsec:tame}
In this subsection we record two probably well-known results on the cohomology
of principal units in tamely ramified extensions. We give proofs for convenience.

\begin{prop} \label{prop:tamely-ramified-1}
	Let $L/K$ be a tamely ramified Galois extension of local fields.
	Then $U_L^n$ is cohomologically trivial for every integer $n \geq 1$.
\end{prop}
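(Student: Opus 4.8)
The plan is to reduce to a totally ramified situation and then exploit the Herbrand quotient computation from Corollary \ref{cor:Herbrand}. Write $I$ for the inertia subgroup of $G$. Since $L/K$ is tamely ramified, $I$ is cyclic (its $p$-Sylow subgroup $G_1$ is trivial), the fixed field $L^I$ is the maximal unramified subextension, and $L/L^I$ is totally (and tamely) ramified with cyclic Galois group $I$. The quotient $G/I \simeq \Gal(\lambda/\kappa)$ acts on $(U_L^n)^I$. I would apply Lemma \ref{lem:spectral-argument} with the normal subgroup $N = I$: it suffices to show that $H^i(I, U_L^n) = 0$ for all $i$ and that $H^i(G/I, (U_L^n)^I) = 0$ for all $i$. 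The second vanishing should be the easier half, handled separately below; the real content is the first.

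For the totally ramified part, fix $i$ and consider the extension $L/L^I$ with cyclic group $I$ of order $e$ (prime to $p$). I would argue by descending induction on $n$ using the filtration $U_L^n \supseteq U_L^{n+1} \supseteq \cdots$, or equivalently work with the successive quotients. For $n$ sufficiently large the $p$-adic logarithm gives $\Z_p[I]$-isomorphisms $U_L^n \simeq \mathfrak P_L^n$, and $\mathfrak P_L^n$ is cohomologically trivial over $L/L^I$ by Köck's theorem (totally tamely ramified forces $g_1 = 1$, so the congruence $n \equiv 1 \bmod g_1$ is automatic); alternatively one sees directly that $\mathfrak P_L^n$ is a free $\mathcal O_{L^I}[I]$-module by the normal basis theorem since $e$ is prime to $p$. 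This anchors the induction. For the inductive step from $n+1$ to $n$, the short exact sequence
\[
0 \longrightarrow U_L^{n+1} \longrightarrow U_L^n \longrightarrow \lambda \longrightarrow 0
\]
(using \eqref{eqn:second-nc}, and noting the quotient is $\mathfrak P_L^n/\mathfrak P_L^{n+1} \simeq \lambda$ as $I$-modules) reduces the claim for $U_L^n$ to cohomological triviality of $\lambda$ as an $I$-module. But $\lambda$ carries a module structure over the residue field $\lambda$ of $L^I$'s... more precisely, since $e$ is prime to $p = \mathrm{char}(\lambda)$, the group ring $\lambda[I]$ is semisimple, so every $\lambda[I]$-module — in particular $\lambda$ viewed via the $I$-action on $\mathfrak P_L^n/\mathfrak P_L^{n+1}$ — is cohomologically trivial. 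Combining this with the long exact cohomology sequence, cohomological triviality of $U_L^{n+1}$ yields that of $U_L^n$, completing the induction downward to $n = 1$. (One must check the same argument over every subgroup $H$ of $I$, but since $H$ also has order prime to $p$, the identical reasoning applies.)

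It remains to handle $H^i(G/I, (U_L^n)^I)$. By Corollary \ref{cor:units-invariants} applied to the totally ramified extension $L/L^I$, we have $(U_L^n)^I = U_{L^I}^{1 + \lfloor (n-1)/e \rfloor}$, which is the group of principal units of some level $m \geq 1$ in the local field $L^I$. Now $L^I/K$ is unramified with cyclic Galois group $G/I$, so it suffices to show that principal units in an unramified extension are cohomologically trivial over the corresponding subgroups. For this I would use that in an unramified extension $U_{L^I}^m/U_{L^I}^{m+1} \simeq \lambda$ with the natural $\Gal(\lambda/\kappa)$-action, which is cohomologically trivial by the normal basis theorem (here $\lambda/\kappa$ is a separable extension of fields, so $\lambda$ is a free $\kappa[\Gal(\lambda/\kappa)]$-module); running the same filtration argument as above, anchored again by the logarithm isomorphism for large level, gives the vanishing for all $m$ and all subgroups of $G/I$.

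I expect the main obstacle to be bookkeeping the module structures correctly — in particular verifying that the successive quotients $U_L^n/U_L^{n+1}$ are genuinely isomorphic as $G$-modules (not merely as abstract groups) to $\mathfrak P_L^n/\mathfrak P_L^{n+1}$, and tracking that at every stage the relevant group (a subquotient of $I$, or a subgroup of $G/I$) has order prime to $p$ or acts through a separable field extension, so that the semisimplicity / normal-basis input applies. Once these identifications are in place, the cohomological vanishing is formal from the long exact sequences and Lemma \ref{lem:spectral-argument}.
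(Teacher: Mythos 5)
Your proof is written for a narrower statement than the one in the paper: the proposition (and Theorem \ref{thm:units-ct}, of which it is an ingredient) is asserted for local fields with no restriction on the characteristic — only \S\ref{sec:formal} assumes characteristic $0$. Both halves of your descending induction are anchored by the claim that for $n$ sufficiently large the $p$-adic logarithm gives an equivariant isomorphism $U_L^n \simeq \mathfrak P_L^n$. That anchor exists only in characteristic $0$; in characteristic $p$ the logarithm is unavailable, and in fact $\mathfrak P_L^n$ is an $\F_p$-vector space while $U_L^n$ is torsion-free, so the two are not isomorphic at any level and the induction has nothing to start from. For the inertia half this is harmless, because the anchor (and indeed the whole filtration argument) is unnecessary: $U_L^n$ is a pro-$p$-module and, by tameness, every subgroup $H \leq I$ has order prime to $p$, so multiplication by $|H|$ is invertible on $U_L^n$ while annihilating $H^i(H,U_L^n)$, which gives the vanishing at once. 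But for the unramified half, where $p$ may divide $|G/I|$, your argument genuinely needs a different input in characteristic $p$: the standard fix (and the paper's route) is to use the cohomological triviality of the full unit group in unramified extensions \cite[Proposition 7.1.2]{MR2392026} and compare it with level $m$ through the \emph{finite} filtration whose quotients are $\lambda^{\times}$ and copies of $\lambda$ (cohomologically trivial by Hilbert 90 and the normal basis theorem), so that no passage to large $m$ is required.

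Two further remarks. First, Lemma \ref{lem:spectral-argument} applied with $N=I$ only yields $H^i(G,U_L^n)=0$; cohomological triviality requires the vanishing for every subgroup $U \leq G$, so you must rerun the argument for $U$ with $N = U \cap I$, using Corollary \ref{cor:units-invariants} for $L/L^{U\cap I}$ to identify $(U_L^n)^{U\cap I}$ with a group of principal units of $L^{U\cap I}$, viewed over the unramified extension $L^{U\cap I}/L^{U}$; your generic statement about unramified extensions covers this, but the reduction should be made explicit. Second, for comparison, the paper's proof is much shorter and characteristic-free: since $U_L^n$ is pro-$p$, triviality need only be checked on $p$-subgroups; tameness forces such subgroups to have trivial inertia, so one is reduced to unramified extensions, where \eqref{eqn:first-nc} and \eqref{eqn:second-nc} are $G$-equivariant and the result follows from the cohomological triviality of $U_L$.
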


\begin{proof}
		As $U_L^n$ is a pro-$p$-group for every $n \geq 1$, we may and do
		assume that $G$ is a $p$-group. In particular, we may
		assume that $L/K$ is unramified. However, in this case
		the isomorphisms \eqref{eqn:first-nc} and \eqref{eqn:second-nc}
		are $G$-equivariant. 
		The cohomology of $\lambda$ and $\lambda^{\times}$ vanishes.
		Thus the result follows from the
		cohomological triviality of $U_L$ in unramified extensions
		\cite[Proposition 7.1.2]{MR2392026}.
\end{proof}

\begin{prop} \label{prop:tamely-ramified-2}
	The group of principal units $U_L^1$ is cohomologically trivial
	if and only if $L/K$ is tamely ramified.
\end{prop}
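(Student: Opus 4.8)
The plan is to prove the two implications separately. One direction is already covered by the previous proposition: if $L/K$ is tamely ramified, then $U_L^1$ is cohomologically trivial by Proposition \ref{prop:tamely-ramified-1} applied with $n=1$. So the real content is the converse: if $U_L^1$ is cohomologically trivial, then $L/K$ must be tamely ramified, i.e.\ $G_1 = 1$.

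For the converse I would argue by contraposition. Suppose $L/K$ is wildly ramified, so $G_1 \neq 1$. Since $U_L^1$ is a pro-$p$-group, its Tate cohomology over $G$ is controlled by the $p$-Sylow subgroups of $G$ (a module that is cohomologically trivial over $G$ is in particular cohomologically trivial over every subgroup, and conversely cohomological triviality over all $p$-Sylow subgroups for all $p$ implies it over $G$; but for a pro-$p$-group only the $p$-part matters). Concretely, it suffices to produce \emph{one} subgroup $U \leq G$ for which $H^q(U, U_L^1) \neq 0$ for some $q$. The natural candidate is a subgroup $U$ contained in $G_1$; replacing $L/K$ by $L/L^U$ we have $U_L = (U_L^1)(L)^{U}$-picture reduces us to the case where $G = U$ is a $p$-group and $L/K$ is totally (wildly) ramified. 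Even better, by choosing $U$ to be a subgroup of $G_1$ of order $p$ we may assume $G$ is cyclic of order $p$ and $L/K$ is totally wildly ramified of degree $p$.

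In this reduced situation I would use the Herbrand quotient. From the filtration of $U_L$ by the $U_L^n$ and the isomorphism $\mathbb{G}_m(\mathfrak{P}_L^1) \simeq U_L^1$, together with $U_L / U_L^1 \simeq \lambda^\times$ (which has trivial cohomology as a module over the $p$-group $G$, since $|\lambda^\times|$ is prime to $p$), multiplicativity of the Herbrand quotient gives $h(U_L^1) = h(U_L) = 1$ by Corollary \ref{cor:Herbrand}. Hence it is enough to show that $H^0(G, U_L^1) \neq 0$, equivalently by the Remark that the norm map $N_{G}\colon U_L^1 \to (U_L^1)^G = U_K^1$ is \emph{not} surjective; here I used Corollary \ref{cor:units-invariants}, noting $1 + \lfloor (1-1)/e\rfloor = 1$. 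This is a standard computation: for a totally wildly ramified cyclic extension of degree $p$, one analyses the norm on the successive quotients $U_L^n/U_L^{n+1} \simeq \lambda$ via the induced $\lambda$-linear maps, which are given (up to the residue of the different) by either an isomorphism, the zero map, or $x \mapsto x^p - cx$ depending on $n$ relative to the unique break $t = G_1$-jump; a counting argument then shows the image of $N_G$ in $U_K^1$ has index $p$. The main obstacle is precisely this last norm computation on the graded pieces — keeping track of exactly which level $n$ the "bad" (non-surjective) graded map occurs at, and confirming the cumulative cokernel is nontrivial. This is where one invokes the ramification filtration: the break of $L/K$ is at $t_0 := \psi_{L/K}(\text{something})$, and the map on $U_L^n/U_L^{n+1}$ is surjective for $n$ not a multiple of $p$ below the break and fails to be surjective at $n$ equal to the break value, contributing the missing factor of $p$. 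Alternatively, and perhaps more cleanly, one can cite that for wildly ramified $L/K$ the transfer/verlagerung argument or a direct appeal to the structure of $U_L^1$ as a $\Z_p[G]$-module shows it is not cohomologically trivial; but I would prefer the explicit Herbrand-quotient-plus-norm argument since all the needed ingredients (Corollaries \ref{cor:Herbrand} and \ref{cor:units-invariants}, the non-canonical isomorphisms \eqref{eqn:first-nc}, \eqref{eqn:second-nc}) are already in place.
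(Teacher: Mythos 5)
Your forward direction and your reduction (pass to a subgroup of $G_1$ of order $p$, so that $G$ is cyclic of order $p$ and $L/K$ is totally wildly ramified) match the paper. But the converse, which is the whole content, has a genuine gap: you never actually establish the nonvanishing. The Herbrand quotient computation $h(U_L^1)=h(U_L)=1$ only says $|H^0(G,U_L^1)|=|H^1(G,U_L^1)|$, which is perfectly consistent with cohomological triviality, so everything hinges on your claim that the norm $N_G\colon U_L^1\to U_K^1$ is not surjective --- and that step is left as a sketch whose details you yourself flag as unresolved. It is also genuinely delicate: in the graded analysis (Serre, Ch.~V, \S 3) the induced map on the quotient $U_L^n/U_L^{n+1}\simeq\lambda$ at the break level has the form $x\mapsto x^p-\eta x$, an additive polynomial whose cokernel on the finite field $\lambda$ can a priori be trivial or of order $p$ depending on $\eta$; a naive counting of graded pieces does not by itself force a nontrivial cumulative cokernel, so the argument as proposed does not close.

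The fix is already contained in your own observations, and it is exactly the paper's route. Since $G$ has order $p$ and $U_L/U_L^1\simeq\lambda^\times$ by \eqref{eqn:first-nc} has order prime to $p$, this quotient is cohomologically trivial over $G$; hence the long exact Tate cohomology sequence for $1\to U_L^1\to U_L\to\lambda^\times\to 1$ gives isomorphisms $H^i(G,U_L^1)\simeq H^i(G,U_L)$ for all $i\in\Z$, and Corollary \ref{cor:Herbrand} identifies these groups with $\Z/p\Z\neq 0$. You invoked the triviality of the cohomology of $\lambda^\times$ only to transport the Herbrand quotient, but it transports the individual cohomology groups, which finishes the proof without any norm computation on graded pieces.
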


\begin{proof}
	If $L/K$ is tamely ramified, then $U_L^1$ is cohomologically trivial
	by Proposition \ref{prop:tamely-ramified-1}.
	Now suppose that $L/K$ is wildly ramified. Then there exists a subgroup
	of the inertia group of order $p$. Replacing $G$ by this subgroup
	we may assume that $G$ has order $p$ and that $L/K$ is totally
	ramified. As the index of $U_L^1$ in $U_L$ is finite of order
	prime to $p$ by \eqref{eqn:first-nc}, we then have
	isomorphisms $H^i(G, U_L^1) \simeq H^i(G, U_L)$ for all $i \in \Z$.
	Now Corollary \ref{cor:Herbrand} implies that
	$U_L^1$ is not cohomologically trivial.
\end{proof}

\subsection{Weakly ramified extensions}
Our first task in this subsection is to prove an analogue
of Proposition \ref{prop:tamely-ramified-1} for weakly
ramified extensions.

\begin{prop} \label{prop:weakly-implies-ct}
	Let $L/K$ be a weakly ramified Galois extension of local fields.
	Let $n>1$ be an integer such that $n \equiv 1 \mod g_1$.
	Then $U_L^n$ is cohomologically trivial.
\end{prop}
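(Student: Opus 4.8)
The strategy is a devissage: using Lemma~\ref{lem:spectral-argument} one reduces to the case that $G$ is a $p$-group, hence (since $U_L^n$ is a pro-$p$-group, so the cohomology of any subgroup of order prime to $p$ vanishes automatically) to the case that $G = G_1$ is cyclic of order $p$ and $L/K$ is totally (wildly) ramified with $G_2 = 1$. Indeed, if $G$ is a $p$-group then $G_0 = I$ acts trivially on the successive quotients $U_L^m/U_L^{m+1} \simeq \lambda$ only when... no: more carefully, one filters $G$ by the ramification groups and applies the spectral-sequence lemma to the normal subgroup $G_1$, noting that $G/G_1$ corresponds to a tamely ramified situation where Proposition~\ref{prop:tamely-ramified-1} applies to $(U_L^n)^{G_1}$; one must check $(U_L^n)^{G_1}$ is again of the form $U_{L^{G_1}}^{n'}$ with $n' = 1 + \lfloor (n-1)/g_1 \rfloor = 1$ by Corollary~\ref{cor:units-invariants} and the hypothesis $n \equiv 1 \bmod g_1$, so it is cohomologically trivial as an automatic consequence. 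Thus everything comes down to computing $H^i(G_1, U_L^n)$, and by the standard reduction one may assume $G_1 = G$ is cyclic of order $p$ and $L/K$ totally ramified with $G_2 = 1$.

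In the reduced situation, since $G$ is cyclic it suffices to show that the Herbrand quotient $h(U_L^n) = 1$ \emph{and} that $H^0(G, U_L^n) = 0$ (equivalently $H^{-1}(G, U_L^n) = 0$); vanishing of one cohomology group plus trivial Herbrand quotient forces vanishing of the other. For the Herbrand quotient: by Corollary~\ref{cor:Herbrand} one has $h(U_L) = 1$, and using the filtration $U_L \supset U_L^1 \supset \cdots \supset U_L^n$ together with multiplicativity of the Herbrand quotient on short exact sequences, it is enough to know $h(U_L^m/U_L^{m+1}) = h(\lambda) = 1$ for $1 \le m < n$, which holds because $\lambda$ is a finite $p$-group with $H^0$ and $H^1$ of equal order (in fact $\lambda$ is cohomologically trivial when $G$ acts trivially... but here the action need not be trivial, so one instead just needs $|H^0| = |H^1|$, which follows since $\lambda$ is an $\F_p$-vector space and for a cyclic $p$-group the two orders agree). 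Hence $h(U_L^n) = h(U_L) = 1$.

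It remains to prove $H^0(G, U_L^n) = 0$, i.e.\ the norm map $N_{G}: U_L^n \to (U_L^n)^G = U_K^{1 + \lfloor (n-1)/e \rfloor} = U_K^1$ is surjective (using Corollary~\ref{cor:units-invariants} with $e = p$ and $n \equiv 1 \bmod p$, so $1 + \lfloor (n-1)/p \rfloor = 1 + (n-1)/p$; one must double-check this equals the correct level, and if it is $> 1$ one iterates). I expect the surjectivity of the norm to be the main obstacle, and the weak-ramification hypothesis $G_2 = 1$ should enter precisely here. The plan is to analyze the norm on successive quotients: the induced maps $U_L^m/U_L^{m+1} \to U_K^{\psi^{-1}\text{-level}}/(\cdots)$ between one-dimensional $\lambda$- resp.\ $\kappa$-spaces, and to show, using the explicit description of the different/ramification filtration (the formula \eqref{eqn:phi-formula} for $\phi$, together with $g_0 = p$, $g_1 = p$, $g_i = 1$ for $i \ge 2$), that the norm is surjective level by level. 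Concretely, for $\sigma$ a generator of $G$ and $\pi_L$ a uniformizer, $\sigma(\pi_L)/\pi_L \in U_L^1 \setminus U_L^2$ exactly because $G_2 = 1$ (the different has valuation $2(p-1)$), and a direct computation of $N_G(1 + a\pi_L^m)$ modulo higher units shows the leading term lands surjectively onto the appropriate graded piece over $\kappa$; passing to the limit over the complete filtration gives surjectivity of $N_G$ onto $U_K^1$. Combining this with $h(U_L^n) = 1$ yields $H^i(G, U_L^n) = 0$ for all $i$, and unwinding the devissage completes the proof.
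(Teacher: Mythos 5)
Your overall strategy is the same as the paper's: reduce via Lemma \ref{lem:spectral-argument} and Proposition \ref{prop:tamely-ramified-1} to the case that $G$ is cyclic of order $p$, $L/K$ totally ramified with $G_2=1$; then combine a trivial Herbrand quotient ($h(U_L^n)=h(U_L)=1$, since $[U_L:U_L^n]$ is finite and Corollary \ref{cor:Herbrand} applies) with the vanishing of $H^0(G,U_L^n)$. The genuine gap is that the one step carrying the content --- surjectivity of the norm onto $(U_L^n)^G$ --- is never proved: you explicitly defer it (``I expect the surjectivity of the norm to be the main obstacle'', ``The plan is \dots''), and the target level is miscomputed. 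By Corollary \ref{cor:units-invariants} with $e=p$, $n\equiv 1 \bmod p$ and $n>1$, one has $(U_L^n)^G=U_K^{1+(n-1)/p}\subseteq U_K^2$, not $U_K^1$, and ``if it is $>1$ one iterates'' is not an argument. What is needed is the exact identification of the image of the norm, namely $N_G(U_L^n)=U_K^{\lceil \phi(n)\rceil}$, which is \cite[Chapter V, \S 3, Corollary 4]{MR554237} (applicable because $n$ lies strictly above the last ramification break $t=1$); together with the weak-ramification formula $\phi(s)=\frac{1}{p}(p+s-1)$ for $s\geq 1$ and $n\equiv 1\bmod p$, this yields $N_G(U_L^n)=U_K^{1+\lfloor\frac{n-1}{p}\rfloor}=(U_L^n)^G$, hence $H^0(G,U_L^n)=0$. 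This is exactly where both hypotheses enter, and it is the part your sketch leaves undone.

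Your proposed ``direct computation of $N_G(1+a\pi_L^m)$ modulo higher units'' amounts to reproving Serre's analysis of the norm on graded pieces; it can be made to work, but only because every level $m\geq n>t$ lies strictly above the break: at level $t$ itself the induced map on graded pieces is an Artin--Schreier-type map with nontrivial cokernel, which is precisely why $U_L^1$ fails to be cohomologically trivial in wildly ramified extensions. You neither verify this restriction nor pin down the receiving level via $\phi$, so as written the surjectivity claim is an assertion, not a proof. Two smaller points: in the reduction you claim $(U_L^n)^{G_1}=U_{L^{G_1}}^{1}$, whereas the correct level is $1+(n-1)/g_1\geq 2$ (harmless, since Proposition \ref{prop:tamely-ramified-1} covers all levels $\geq 1$); and the Herbrand-quotient argument is more simply justified by the fact that any finite module has trivial Herbrand quotient \cite[Chapter VIII, \S 4, Proposition 8]{MR554237}, which is how the paper argues.
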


\begin{proof}
	Let $H$ be a subgroup of $G$. Then $L / L^H$ is also weakly ramified
	and $n \equiv 1 \mod |H_1|$.
	So we may and do assume that $H=G$.
	As $G$ is solvable, Lemma \ref{lem:spectral-argument} and Proposition
	\ref{prop:tamely-ramified-1} imply that we may further assume
	that $G$ is cyclic of order $p$, and that $L/K$ is totally ramified.
	Then for $s \geq 1$ we have
	\[
		\phi(s) = \frac{1}{p}(p+s-1)
	\]
	by equation \eqref{eqn:phi-formula}.
	This gives the second equality in the computation
	\begin{eqnarray*}
		N_G(U_L^n) & = & U_L^{\lceil \phi(n) \rceil}\\
		& = & U_L^{1 + \lceil \frac{n-1}{p} \rceil}\\
		& = & U_L^{1 + \lfloor \frac{n-1}{p} \rfloor}\\
		& = & (U_L^n)^G,
	\end{eqnarray*}
	whereas the first is \cite[Chapter V, \S 3, Corollary 4]{MR554237},
	the third holds as $n \equiv 1 \mod p$ by assumption,
	and the last equality is Corollary \ref{cor:units-invariants}.
	It follows that $H^0(G, U_L^n)$ vanishes.
	As the Herbrand quotient of a finite module is trivial
	\cite[Chapter VIII, \S 4, Proposition 8]{MR554237},
	it follows from \eqref{eqn:first-nc}, \eqref{eqn:second-nc}
	and Corollary \ref{cor:Herbrand} that
	\[
		h(U_L^n) = h(U_L) = 1.
	\]
	Thus $U_L^n$ is cohomologically trivial 
	as desired.
\end{proof}

We now prove the following converse 
of Proposition \ref{prop:weakly-implies-ct}.

\begin{prop} \label{prop:ct-implies-weak}
	Let $L/K$ be a finite Galois extension of local fields 
	with Galois group $G$ and
	let $n \geq 1$ be an integer. Suppose that $U_L^n$ is 
	cohomologically trivial as $G$-module. Then it holds:
	\begin{enumerate}
		\item 
		We have that $n \equiv 1 \mod g_1$.
		\item
		The extension $L/K$ is at most weakly ramified.
	\end{enumerate}
\end{prop}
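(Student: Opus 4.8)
The plan is to prove the contrapositive in two stages, reducing in each case to a small group where one can compute cohomology explicitly, much as in the proofs of Propositions~\ref{prop:tamely-ramified-1} and \ref{prop:weakly-implies-ct}. The key reduction principle is that if $U_L^n$ is cohomologically trivial as a $G$-module, then for every subgroup $H \leq G$ the module $U_L^n$ is cohomologically trivial as an $H$-module, and $L/L^H$ has ramification groups $H_i = G_i \cap H$. So it suffices to find, under the negation of (i) or (ii), a subgroup $H$ for which $U_L^n$ fails to be $H$-cohomologically trivial.

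First I would dispose of the case where $L/K$ is wildly ramified but $G_2 = 1$ fails, i.e.\ not weakly ramified: then $G_2 \neq 1$, so we may pick a subgroup $H \leq G_2$ of order $p$; replacing $K$ by $L^H$ we may assume $G = G_2 = \cdots$ has order $p$ and $L/K$ is totally (wildly) ramified. Now I compute $N_G(U_L^n)$ via Serre's formula (\cite[Chapter V, \S 3, Corollary 4]{MR554237}): $N_G(U_L^n) = U_L^{\lceil \phi(n) \rceil}$. Since the break of the ramification filtration is at least $2$, the function $\phi$ on $[0,s]$ for $s$ in the relevant range is steeper than in the weakly ramified case, so $\lceil \phi(n) \rceil$ is strictly smaller than $1 + \lfloor \frac{n-1}{e} \rfloor$; comparing with $(U_L^n)^G = U_K^{1+\lfloor \frac{n-1}{e}\rfloor}$ from Corollary~\ref{cor:units-invariants} shows $H^0(G, U_L^n) \neq 0$, contradicting cohomological triviality. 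I expect the bookkeeping with $\phi$ and the ceiling/floor functions to be the main obstacle here: one must check that the index $\lceil\phi(n)\rceil$ really lands below the invariants even after accounting for $e = p$ possibly exceeding the jump, and handle whether $n-1$ is divisible by $e$ separately. (If $G_1 = 1$ the statement is vacuous for (ii); if $G_1 \neq 1$ but $G_2 = 1$ one is in the weakly ramified case, where Proposition~\ref{prop:weakly-implies-ct} together with part (i) below already pins down exactly when triviality holds.)

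Next I would prove (i): assume $L/K$ is at most weakly ramified (the case that matters, the wildly-not-weakly case being handled above) but $n \not\equiv 1 \bmod g_1$. If $g_1 = 1$ there is nothing to prove, so $G_1 \neq 1$; pick $H \leq G_1$ of order $p$ and again reduce to $G$ cyclic of order $p$, $L/K$ totally wildly ramified, so $g_1 = p$ and $\phi(s) = \frac{1}{p}(p + s - 1)$ for $s \geq 1$. Then $N_G(U_L^n) = U_L^{\lceil \phi(n) \rceil} = U_L^{1 + \lceil \frac{n-1}{p}\rceil}$, while $(U_L^n)^G = U_K^{1 + \lfloor \frac{n-1}{p}\rfloor} = U_L^{p(1+\lfloor\frac{n-1}{p}\rfloor)}$ viewed inside $U_L$. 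When $n \not\equiv 1 \bmod p$ we have $\lceil \frac{n-1}{p}\rceil = \lfloor \frac{n-1}{p}\rfloor + 1 > \lfloor \frac{n-1}{p}\rfloor$, so $N_G(U_L^n)$ is a proper subgroup of $(U_L^n)^G$ (the quotient being nontrivial by the filtration quotients \eqref{eqn:second-nc}); hence $H^0(G, U_L^n) \neq 0$. The subtlety is to confirm $1 + \lceil\frac{n-1}{p}\rceil \le p\bigl(1+\lfloor\frac{n-1}{p}\rfloor\bigr)$ strictly, so that the relevant quotient of principal-unit groups is genuinely a nonzero finite $p$-group, which holds for $n \geq 1$ and $p \geq 2$ after a short check.

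Finally I would assemble the two parts: the contrapositive of "(ii) holds" gives the first bullet's failure path and the contrapositive of "(i) holds under weak ramification" gives the second, so in all cases where cohomological triviality holds we must have both $n \equiv 1 \bmod g_1$ and $G_2 = 1$. Combined with Proposition~\ref{prop:weakly-implies-ct}, this completes the proof of Theorem~\ref{thm:units-ct} for $n > 1$; the $n = 1$ statement is Proposition~\ref{prop:tamely-ramified-2}. I anticipate the only real work is the careful comparison of $\lceil \phi(n)\rceil$ with the exponent $1 + \lfloor \frac{n-1}{e}\rfloor$ appearing in the invariants, together with correctly translating between ideals $\mathfrak{P}_K^j$ in $K$ and the corresponding principal-unit subgroups of $U_L$ under the inclusion $U_K \hookrightarrow U_L$ (where $\mathfrak{P}_K \mathcal{O}_L = \mathfrak{P}_L^e$).
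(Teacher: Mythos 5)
Your overall strategy---pass to a small wild subgroup and detect $H^0\neq 0$ by comparing the norm image $N_G(U_L^n)$ with the invariants $(U_L^n)^G = U_K^{1+\lfloor (n-1)/e\rfloor}$ of Corollary~\ref{cor:units-invariants}---is the same as the paper's, but two steps do not work as written. The most serious gap is in part (i): by replacing $G$ with a subgroup $H\leq G_1$ of order $p$ you can only ever conclude $n\equiv 1 \bmod |H_1| = p$, whereas the assertion is $n\equiv 1\bmod g_1 = p^k$, which is strictly stronger when $k\geq 2$ (e.g.\ $n=p+1$ passes your test but violates (i)). No iteration over order-$p$ subgroups recovers the modulus $p^k$, and Serre's Chapter V, \S 3 machinery you rely on is confined to cyclic prime-degree extensions. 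The paper avoids this by keeping the whole group $G=G_1$ of order $p^k$ and using the inclusion $N_G(U_L^{\psi(m)+1})\subseteq U_K^{m+1}$ of \cite[Chapter V, \S 6, Proposition 8]{MR554237} with $m=\lfloor\phi(n-1)\rfloor$, which holds for arbitrary (non-cyclic, non-prime-degree) extensions; the reduction to an order-$p$ subgroup is made only afterwards, to prove (ii) once (i) is in hand.

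Second, in the non-weakly-ramified case your appeal to the exact image formula $N_G(U_L^n)=U_K^{\lceil\phi(n)\rceil}$ is doubly problematic. The inequality you assert is reversed: a last jump $t\geq 2$ makes $\phi(n)$ \emph{larger}, since $\phi(n)=t+\frac{n-t}{p}\geq 1+\frac{n-1}{p}$, and it is precisely $\lceil\phi(n)\rceil > 1+\lfloor\frac{n-1}{p}\rfloor$ that forces $N_G(U_L^n)\subsetneq (U_L^n)^G$ and hence $H^0(G,U_L^n)\neq 0$; with ``strictly smaller'' as you wrote it, the claim is false and the deduction does not go through. Moreover, \cite[Chapter V, \S 3, Corollary 4]{MR554237} is only valid above the last ramification jump (for $n>t$); for $2\leq n\leq t$ the norm image has index $p$ in $U_K^{n}=U_K^{\lceil\phi(n)\rceil}$, so the formula fails and this range of $n$ is left unhandled. (It can be repaired: for such $n$ one only needs $N_G(U_L^n)\subseteq U_K^{n}\subsetneq U_K^{1+\lfloor\frac{n-1}{p}\rfloor}$, and the case $n=1$ of (ii) is Proposition~\ref{prop:tamely-ramified-2}, but none of this is said.) The paper's chain of inclusions \eqref{eqn:units-inclusions} sidesteps both difficulties simultaneously, since the cited \S 6 inclusion needs no hypothesis on $n$ relative to the jump. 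Also, identities like $(U_L^n)^G=U_L^{p(1+\lfloor\frac{n-1}{p}\rfloor)}$ are not correct: $U_K^m$ is contained in, but not equal to, $U_L^{em}$.
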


\begin{proof}
	By Proposition \ref{prop:tamely-ramified-2} we may assume
	that $n>1$. If $U_L^n$ is cohomologically trivial as a
	$G$-module, then in particular as a $G_1$-module.
	We may therefore assume that $G = G_1$ and also that $G$ is
	non-trivial.
	Then there is an integer  $k \geq 1$ such that
	$|G| = g_1 =  p^k$. We put $m :=\lfloor \phi(n-1) \rfloor$. Then
	by \eqref{eqn:phi-formula} we have
	\begin{equation} \label{eqn:m-inequality}
	m = \lfloor \phi(n-1) \rfloor
	= \left\lfloor \frac{1}{p^k}\sum_{i = 1}^{n-1} g_i \right\rfloor
	= 1 + \left\lfloor \frac{1}{p^k}\sum_{i = 2}^{n-1} g_i \right\rfloor
		\geq 1 + \left\lfloor \frac{n-2}{p^k} \right\rfloor
		\geq \left\lfloor \frac{n-1}{p^k} \right\rfloor.
	\end{equation}
	We now consider the following chain of inclusions:
	\begin{equation} \label{eqn:units-inclusions}
		N_G(U_L^n) \subseteq N_G(U_L^{\psi(m)+1})
		\subseteq U_K^{m+1}
		\subseteq U_K^{1 + \lfloor \frac{n-1}{p^k} \rfloor}.
	\end{equation}
	Here, the first inclusion follows from $m \leq \phi(n-1)$
	and thus $\psi(m) + 1 \leq n$. The second inclusion is
	\cite[Chapter V, \S 6, Proposition 8]{MR554237} and the last is due to
	\eqref{eqn:m-inequality}. However, as $H^0(G, U_L^n)$ vanishes,
	Corollary \ref{cor:units-invariants} implies that
	\[
		U_K^{1 + \lfloor \frac{n-1}{p^k} \rfloor} = (U_L^n)^G = N_G(U_L^n).
	\]
	Thus all inclusions in \eqref{eqn:units-inclusions} are in fact 
	equalities and therefore
	$m = \lfloor \frac{n-1}{p^k} \rfloor$. It now follows from
	\eqref{eqn:m-inequality} that
	\[
		 1 + \left\lfloor \frac{n-2}{p^k} \right\rfloor
		= \left\lfloor \frac{n-1}{p^k} \right\rfloor
	\]
	and thus (i) holds. Now choose $t \in \Z$ such that $G_t \not=1$
	and $G_{t+1} = 1$. Note that $t \geq 1$.
	If $H$ is any subgroup of $G$, then one has $H_i = H \cap G_i$
	for all $i \geq -1$. So we may further assume that
	$k=1$ and thus we have
	\[
		G = G_{-1} = G_0 = \dots = G_t \simeq \Z/ p \Z.
	\] 
	In this special situation we have
	\[
		\phi(s) = \left\{ \begin{array}{lll}
		s & \mbox{ if } & s \leq t, \\
		t + \frac{s-t}{p} & \mbox{ if } & s \geq t.
		\end{array} \right.
	\]
	As $m = \lfloor \phi(n-1) \rfloor = \lfloor \frac{n-1}{p} \rfloor$,
	we have $n-1>t$ and thus
	\[
		\left\lfloor \frac{n-1}{p} \right\rfloor 
		= t + \left\lfloor \frac{n - 1 -t}{p} \right\rfloor.
	\]
	This is only possible if $t=1$ and therefore $G_2$ vanishes.
\end{proof}
 
\begin{proof}[Proof of Theorem \ref{thm:units-ct}]
	This now follows easily from Propositions \ref{prop:tamely-ramified-2},
	\ref{prop:weakly-implies-ct} and \ref{prop:ct-implies-weak}.
\end{proof}

\section{Formal groups} \label{sec:formal}

The aim of this section is to generalize Theorem \ref{thm:units-ct}
to arbitrary formal groups over local fields of characteristic $0$.

\subsection{Basic definitions and examples}

\begin{definition}
	Let $R$ be a commutative ring.
	A (commutative) \emph{formal group} $\mathcal{F}$ over $R$ is given by a power series
	$F(X,Y) \in R\llbracket X,Y \rrbracket$ with the following properties:
	\begin{enumerate}
		\item
		$F(X,Y) \equiv X + Y \mod (\deg 2)$.
		\item
		$F(X,F(Y,Z)) = F(F(X,Y),Z)$.
		\item 
		$F(X,Y) = F(Y,X)$.
	\end{enumerate}
	The power series $F$ is called the \emph{formal group law}
	of the formal group $\mathcal{F}$.
\end{definition}

If $R$ is a complete discrete valuation ring, then one can associate
proper groups to a formal group over $R$.

\begin{definition}
	Let $R$ be a complete discrete valuation ring with maximal
	ideal $\mathfrak{m}$. Let $\mathcal F$ be a formal group over $R$
	with formal group law $F \in R\llbracket X,Y \rrbracket$.
	Then for every positive integer $n$ one can associate to $\mathcal{F}$
	the group $\mathcal{F}(\mathfrak m^n)$ which as a set equals
	$\mathfrak m^n$ with the new group law
	\[
		x +_{\mathcal{F}} y = F(x,y), \quad 
		x,y \in \mathcal{F}(\mathfrak m^n).
	\]
	We will call $\mathcal{F}(\mathfrak m^n)$ the 
	\emph{associated group of level $n$}.
\end{definition}

\begin{examples}
	Let $R$ be a commutative ring.
	\begin{enumerate}
		\item 
		The power series $F(X,Y) = X+Y$ obviously defines a formal group
		which is called the \emph{additive formal group} and will be
		denoted by $\mathbb G_a$. If $R$ is a complete discrete
		valuation ring with maximal ideal $\mathfrak m$, then one
		has $\mathbb{G}_a(\mathfrak m^n) = \mathfrak m^n$
		as groups for every positive integer $n$.
		\item
		The power series $F(X,Y) = X + Y + XY$ defines a formal group
		which is called the \emph{multiplicative formal group} and will be
		denoted by $\mathbb G_m$. If $R = \mathcal{O}_L$
		is the ring of integers in a local field $L$, then one has
		canonical isomorphisms
		$\mathbb{G}_m(\mathfrak P_L^n) \simeq U_L^n$ for every $n \in \N$.
		\item
		Let $L$ be a local field of characteristic $0$ and let
		$\pi \in \mathfrak P_L$ be a uniformizer.
		Let $q$ denote the cardinality of the residue field $\lambda$.
		Then for every power series 
		$f \in \mathcal{O}_L \llbracket Z \rrbracket$ such that
		$f(Z) \equiv Z^q \mod \pi$ and $f(Z) \equiv \pi Z \mod Z^2$
		there is a unique power series $F(X,Y) \in
		\mathcal{O}_L \llbracket X,Y \rrbracket$ such that
		$F(X,Y) \equiv X + Y \mod (\deg 2)$ and
		$f(F(X,Y)) = F(f(X),f(Y))$. This power series defines a formal
		group  $\mathcal{F}_{\pi}$ over $\mathcal O_L$,
		the \emph{Lubin--Tate formal group} associated to $\pi$.
		In fact, $\pi$ determines the formal group $\mathcal{F}_{\pi}$
		up to isomorphism. We refer the reader to
		\cite{MR0172878} for more details.
		\item
		Let $L$ be a local field of characteristic $0$
		and let $E/L$ be an elliptic curve given by a
		minimal Weierstraß equation. Then $E$ defines a
		formal group $\hat E$ over $\mathcal{O}_L$. The associated group
		of level $n$ will be denoted by $E_L^n$. Then 
		one has isomorphisms
		\[
			E_L^n \simeq E_n(L) := \left\{(x,y) \in E(L) \mid
			v_L(x) \leq -2n \right\} \cup \left\{O\right\}.
		\]
		Indeed by \cite[Chapter VII, Proposition 2.2]{MR2514094}
		the map $E_1(L) \rightarrow E_L^1$, $(x,y) \mapsto - \frac{x}{y}$
		is an isomorphism. As $2 v_L(y) = 3 v_L(x)$, we have
		$- \frac{x}{y} \in E_L^n$ if and only if $v_L(x) \leq -2n$
		(see also \cite[Exercise 7.4]{MR2514094}).
	\end{enumerate}
\end{examples}

\subsection{Galois invariants}
For the rest of this section we let $L/K$ be a finite Galois extension
of local fields of characteristic $0$. 
Recall the notation of \S \ref{sec:principal}. In particular,
we have $G = \Gal(L/K)$ and $e$ denotes the ramification index.
If $\mathcal F$ is a formal group over
$\mathcal{O}_K$ with formal group law 
$F \in \mathcal{O}_K \llbracket X,Y \rrbracket$, we put
$F_L^n := \mathcal{F}(\mathfrak{P}_L^n)$ and
$F_K^n := \mathcal{F}(\mathfrak{P}_K^n)$
for every positive integer $n$.
As $\mathcal{F}$ is defined over $\mathcal{O}_K$, the Galois
group $G$ acts on the associated groups $F_L^n$.

\begin{lemma} \label{lem:F_invariants}
	Let $\mathcal F$ be a formal group over
	$\mathcal{O}_K$ and let $n>0$ be an integer. Then
	we have an equality
	\[
		(F_L^n)^G = F_K^{1 + \lfloor \frac{n-1}{e} \rfloor}.
	\]
\end{lemma}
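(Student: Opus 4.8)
The plan is to reduce the statement about the formal group $F_L^n$ to the corresponding statement about the ideal $\mathfrak P_L^n$, which is Lemma~\ref{lem:Koeck} due to Köck. The key point is that, although the group law on $F_L^n$ differs from ordinary addition, the \emph{underlying set} of $G$-fixed points does not: an element $x \in \mathfrak P_L^n$ is fixed by $G$ under the formal group action if and only if $\sigma(x) = x$ for all $\sigma \in G$, and since each $\sigma$ is a ring automorphism of $\mathcal O_L$ (in particular additive), this is exactly the condition that $x$ lie in $(\mathfrak P_L^n)^G$ in the ordinary sense. So as sets we have $(F_L^n)^G = (\mathfrak P_L^n)^G$, and by Lemma~\ref{lem:Koeck} the latter equals $\mathfrak P_K^{1 + \lfloor (n-1)/e \rfloor}$.

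The remaining work is to identify this set with $F_K^{1 + \lfloor (n-1)/e \rfloor}$ as a \emph{group}, i.e.\ to check that the formal group structure on $(F_L^n)^G$ induced from $F_L^n$ agrees with the one on $F_K^{1 + \lfloor (n-1)/e \rfloor}$ coming from $\mathcal F$ over $\mathcal O_K$. This is essentially a tautology: $\mathcal F$ is defined over $\mathcal O_K$, so the formal group law $F$ has coefficients in $\mathcal O_K$ and hence is $G$-equivariant; the group operation $x +_{\mathcal F} y = F(x,y)$ restricted to $G$-fixed points is the same power series evaluated on elements of $\mathcal O_K$, which is precisely the definition of the group law on $\mathcal F(\mathfrak m^{1 + \lfloor (n-1)/e \rfloor})$ for $\mathfrak m = \mathfrak P_K$. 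One does need the elementary observation that $\mathfrak P_K^{1 + \lfloor (n-1)/e\rfloor} \subseteq \mathfrak P_K$ so that $F(x,y)$ converges for $x,y$ in this ideal and the formal group $F_K^{1+\lfloor(n-1)/e\rfloor}$ makes sense, but this is immediate.

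The proof is therefore short: first note that the set-theoretic fixed points of $F_L^n$ coincide with those of $\mathfrak P_L^n$; then invoke Lemma~\ref{lem:Koeck}; then observe that the group structures match because $F$ has coefficients in $\mathcal O_K$. I do not anticipate any genuine obstacle here—the only thing to be slightly careful about is to state clearly why passing from ordinary addition to the formal group law does not change the fixed-point set, since that is the one place where a reader might hesitate. Everything else is bookkeeping with the definitions of $F_L^n$ and $F_K^n$ given just above.
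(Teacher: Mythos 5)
Your proof is correct and is essentially the paper's argument: the paper simply states the lemma is immediate from Lemma~\ref{lem:Koeck}, and your write-up just makes explicit the routine points (the $G$-fixed set is insensitive to the change of group law since $G$ acts by ring automorphisms, and the induced group structure agrees because $F$ has coefficients in $\mathcal{O}_K$). No gaps.
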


\begin{proof}
	This is immediate from Lemma \ref{lem:Koeck}.
\end{proof}

\subsection{The norm map}

Let $\mathcal F$ be a formal group over
$\mathcal{O}_K$ and let $k>0$ be an integer.
If $x_1, \dots, x_k$ belong to $F_L^1$, we let
\[
	\sum_{j \in J}^{\mathcal{F}} x_j
	:= x_1 +_{\mathcal{F}} \dots +_{\mathcal{F}} x_k,
\]
where $J = \left\{1,\dots,k \right\}$
and similarly for other index sets $J$.

\begin{definition}
	Let $\mathcal F$ be a formal group over
	$\mathcal{O}_K$. We define a \emph{norm map}
	\begin{eqnarray*}
	N_G^{\mathcal{F}}: F_L^1 & \longrightarrow & F_K^1\\
	x & \mapsto & \sum_{\sigma \in G}^{\mathcal{F}} \sigma(x).
	\end{eqnarray*}
	We let $\Tr_{L/K} := N_G^{\mathbb G_a}$ and
	$N_{L/K} := N_G^{\mathbb G_m}$ be the usual trace and norm maps,
	respectively.
\end{definition}

\begin{lemma} \label{lem:Hazewinkel}
	Let $\mathcal F$ be a formal group over
	$\mathcal{O}_K$ and let $x \in F_L^1$.
	Then there are $a_i \in \mathcal{O}_K$, $1 \leq i < \infty$,
	such that
	\[
		N_G^{\mathcal{F}}(x) \equiv \Tr_{L/K}(x) + 
		\sum_{i=1}^{\infty} a_i (N_{L/K}(x))^i \mod \Tr_{L/K}(x^2 \mathcal{O}_L).
	\]
\end{lemma}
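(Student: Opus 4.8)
The plan is to reduce the claim to a formal-power-series identity that holds at the level of the ring $\mathcal{O}_L$ and then push it down to $\mathcal{O}_K$ using the definition of the various norm-type maps. First I would recall that, by a theorem of Hazewinkel (see \cite{MR0349692}), for a formal group $\mathcal F$ over $\mathcal{O}_K$ there is a power series identity expressing the ``norm'' $\prod_{\sigma\in G}^{\mathcal F}\sigma(X)$ — suitably interpreted via the symmetric functions in the conjugates — as a universal power series in the elementary symmetric functions $\sigma_1,\dots,\sigma_{|G|}$ of the conjugates of $X$, with coefficients in $\mathcal{O}_K$. The key structural input is that, modulo degree $2$, the formal sum $x_1 +_{\mathcal F}\cdots+_{\mathcal F} x_k$ agrees with $x_1 + \cdots + x_k$, and that the higher-degree correction terms of $F$ are polynomials in the $x_i$ with coefficients in $\mathcal{O}_K$; so $N_G^{\mathcal F}(x)$ differs from $\Tr_{L/K}(x)$ by an expression that is a sum of monomials of total degree $\geq 2$ in the conjugates $\sigma(x)$, with $\mathcal{O}_K$-coefficients.

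The next step is to organize that higher-degree error. Write $\mathrm{err}(x) := N_G^{\mathcal F}(x) - \Tr_{L/K}(x)$, a power series (in $x$, with coefficients determined by $G$ and $F$) all of whose terms involve a product of at least two conjugates $\sigma(x)$, $\tau(x)$ with $\sigma\neq\tau$ or $\sigma=\tau$. I would separate the monomials into two types: those that are ``$G$-symmetric and use all conjugates'' — i.e. that can be written as a power of $\prod_{\sigma}\sigma(x) = N_{L/K}(x)$ times something — and the rest. Concretely, each symmetric monomial in the conjugates is, by the fundamental theorem of symmetric functions, a polynomial in the power sums $p_j = \sum_\sigma \sigma(x)^j = \Tr_{L/K}(x^j)$ and in $N_{L/K}(x)$; and since $\Tr_{L/K}(x^j) \in \Tr_{L/K}(x^2\mathcal{O}_L)$ for $j \geq 2$ (as $x^j = x^2 \cdot x^{j-2} \in x^2\mathcal{O}_L$ because $x \in \mathfrak{P}_L \subseteq \mathcal{O}_L$), every such contribution is either absorbed into $\Tr_{L/K}(x^2\mathcal{O}_L)$ or is of the form $a_i\, N_{L/K}(x)^i$ with $a_i \in \mathcal{O}_K$. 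The asymmetric part of $\mathrm{err}(x)$ requires a small extra argument: since $N_G^{\mathcal F}(x)$ and $\Tr_{L/K}(x)$ both lie in $\mathcal{O}_K$, hence are $G$-fixed, the error is $G$-fixed, so upon averaging we may replace each monomial by its $G$-average, which is symmetric; this is the point where one really uses that the whole sum is taken over all of $G$.

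The main obstacle, I expect, is precisely controlling the ``cross terms'' — monomials $\prod_\sigma \sigma(x)^{m_\sigma}$ with $0 < \sum m_\sigma$ but not all $m_\sigma$ equal — and showing that, after symmetrization, each either involves some $\Tr_{L/K}(x^j)$ with $j\geq 2$ (and hence lands in $\Tr_{L/K}(x^2\mathcal{O}_L)$) or collapses to a power of $N_{L/K}(x)$. The clean way to handle this is to invoke the Hazewinkel formula in the universal form: one works over the universal base ring (a polynomial ring over $\Z$ generated by the coefficients of $F$), writes $\prod^{\mathcal F}_\sigma \sigma(X)$ as a power series $\Phi(\sigma_1,\dots,\sigma_{|G|})$ in the elementary symmetric functions with $\Phi \equiv \sigma_1 \pmod{\deg 2}$, then substitutes $\sigma_1 = \Tr_{L/K}(x)$, observes $\sigma_j$ for $j \geq 2$ lies in $\Tr_{L/K}(x^2\mathcal{O}_L)$ (via Newton's identities and the divisibility $x^2 \mid x^j$), except that the top one $\sigma_{|G|} = N_{L/K}(x)$ need not, and collects the pure $N_{L/K}(x)$-powers into the series $\sum_i a_i N_{L/K}(x)^i$. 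Verifying that $\sigma_j \in \Tr_{L/K}(x^2\mathcal{O}_L)$ for $2 \leq j < |G|$ — equivalently that every elementary symmetric function except the top one is a trace of a multiple of $x^2$ — is the one genuinely computational point, and it follows by induction from Newton's identities together with $p_j = \Tr_{L/K}(x^j) \in \Tr_{L/K}(x^2\mathcal{O}_L)$ for $j \geq 2$ and the fact that $\Tr_{L/K}(x^2\mathcal{O}_L)$ is an ideal of $\mathcal{O}_K$ stable under multiplication by $\mathcal{O}_K$.
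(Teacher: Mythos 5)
The paper does not actually prove this lemma: its ``proof'' is a citation of Hazewinkel's Corollary 2.4.2 \cite{MR0349692}. Your attempt to supply a real proof breaks down at exactly the point you yourself flag as ``the one genuinely computational point''. Writing the degree-$\geq 2$ part of the symmetric power series $X_1+_{\mathcal F}\cdots+_{\mathcal F}X_k$ ($k=|G|$) as a series in the elementary symmetric functions $e_1,\dots,e_k$ of the conjugates is fine, and the monomials involving only $e_1$ and $e_k$ are handled correctly. But the key claim that $e_j\bigl(\sigma(x):\sigma\in G\bigr)\in\Tr_{L/K}(x^2\mathcal{O}_L)$ for $2\le j\le k-1$ is not established by your argument: Newton's identities express $j\,e_j$ (not $e_j$) as an integral combination of $p_j$ and of products $e_i\,p_{j-i}$, so at best they show $j\,e_j\in\Tr_{L/K}(x^2\mathcal{O}_L)$, and to conclude for $e_j$ you must divide by $j$. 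Since the residue characteristic is $p$ and the relevant case is wild ramification, $j$ is in general not a unit in $\mathcal{O}_K$ (already $j=p$ occurs), so the induction collapses. The same integrality problem infects two other steps: power sums generate the symmetric functions only over $\Q$, not over $\mathcal{O}_K$, so ``each symmetric monomial is a polynomial in the $p_j$ and $N_{L/K}(x)$'' is false integrally; and the ``averaging over $G$'' you invoke to symmetrize divides by orbit sizes, again non-units --- and is in any case unnecessary, because commutativity and associativity of $F$ make the series already symmetric.

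A division-free variant of your idea does work when $G$ is cyclic of prime order (which is the only case in which the paper applies the lemma, in Proposition \ref{prop:F-inclusions}): group the monomials of the symmetric series into orbits under translation by $G$; every non-constant exponent pattern has trivial stabilizer, so its orbit sum is $\Tr_{L/K}(y)$ with $y$ a product of at least two conjugates of $x$, hence $y\in x^2\mathcal{O}_L$, while the constant patterns contribute exactly the powers $N_{L/K}(x)^i$ with coefficients in $\mathcal{O}_K$. For general $G$, however, orbits with non-trivial stabilizer $H$ produce traces $\Tr_{L^H/K}(y)$ from proper intermediate fields, which need not lie in $\Tr_{L/K}(x^2\mathcal{O}_L)$ when $L/L^H$ is wildly ramified; your proposal does not address these cross terms, and this is precisely the difficulty that Hazewinkel's argument (which the paper cites rather than reproves) has to overcome.
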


\begin{proof}
	This is \cite[Corollary 2.4.2]{MR0349692}.
\end{proof}

\subsection{Norm maps in totally ramified extensions}

We first prove a generalization of
\cite[Chapter V, \S 3, Proposition 4]{MR554237}.
Let $t \in \Z$ be the last ramification jump, that is $G_t \not= 1$
and $G_{t+1} = 1$.

\begin{prop} \label{prop:F-inclusions}
	Let $\ell$ be a prime and suppose that $L/K$ is totally ramified
	and cyclic of degree $\ell$. Then for every $n \in \N$ we have
	inclusions
	\begin{enumerate}
		\item 
		$N_G^{\mathcal{F}}(F_L^{\psi(n)}) \subseteq F_K^n$ and
		\item
		$N_G^{\mathcal{F}}(F_L^{\psi(n)+1}) \subseteq F_K^{n+1}$.
	\end{enumerate}
\end{prop}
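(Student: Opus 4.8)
The plan is to reduce everything to Hazewinkel's formula (Lemma \ref{lem:Hazewinkel}) combined with the known behaviour of the additive and multiplicative norm maps, which are exactly \cite[Chapter V, \S 3, Proposition 4]{MR554237} and its companion. First I would record what that proposition (together with the analogue for $\mathfrak{P}^{n+1}$) gives for $\mathbb{G}_a$ and $\mathbb{G}_m$: since $L/K$ is totally ramified and cyclic of prime degree $\ell$, one has $\Tr_{L/K}(\mathfrak{P}_L^{\psi(n)}) \subseteq \mathfrak{P}_K^n$ and $N_{L/K}(U_L^{\psi(n)}) \subseteq U_K^n$, and likewise with $\psi(n)$ replaced by $\psi(n)+1$ and the target raised by one. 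The case $\ell \neq p$ is easy because then $L/K$ is tamely ramified, $\psi$ is linear, and the trace is surjective onto the correct ideal; the interesting case is $\ell = p$.

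Next I would take $x \in F_L^{\psi(n)}$ and apply Lemma \ref{lem:Hazewinkel}, which expresses $N_G^{\mathcal{F}}(x)$ modulo $\Tr_{L/K}(x^2\mathcal{O}_L)$ as $\Tr_{L/K}(x) + \sum_{i \geq 1} a_i (N_{L/K}(x))^i$ with $a_i \in \mathcal{O}_K$. I would bound each of the three contributions separately. The term $\Tr_{L/K}(x)$ lies in $F_K^n$ by the classical trace bound. For the error term $\Tr_{L/K}(x^2\mathcal{O}_L)$: if $x \in \mathfrak{P}_L^{\psi(n)}$ then $x^2\mathcal{O}_L \subseteq \mathfrak{P}_L^{2\psi(n)}$, and since $\psi(n) \geq n$ by \eqref{eqn:phi-psi-inequalities} one checks $\psi(2\psi(n) \text{-ish}) $ comparisons — more precisely I would show $2\psi(n) \geq \psi(n+1)$ (equivalently $\psi(n) \geq \psi(n+1)-\psi(n)$, which holds because $\psi$ has slopes $\geq 1$ and $\psi(n)\geq n \geq 1$), hence $\Tr_{L/K}(x^2\mathcal{O}_L) \subseteq \Tr_{L/K}(\mathfrak{P}_L^{\psi(n+1)}) \subseteq \mathfrak{P}_K^{n+1} \subseteq \mathfrak{P}_K^n$. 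For the norm sum: $N_{L/K}(U_L^{\psi(n)}) \subseteq U_K^n$, i.e. $N_{L/K}(x) \in \mathfrak{P}_K^n$, so $(N_{L/K}(x))^i \in \mathfrak{P}_K^{ni} \subseteq \mathfrak{P}_K^n$ for all $i \geq 1$, and multiplying by $a_i \in \mathcal{O}_K$ keeps us in $\mathfrak{P}_K^n$; one should note the sum converges $\mathfrak{P}_K$-adically and its value lies in the closed ideal $\mathfrak{P}_K^n$. Adding the three pieces, which are all elements of $\mathfrak{P}_K^n = F_K^n$ as a set, and using that $F_K^n$ is a subgroup of $F_K^1$ under $+_{\mathcal{F}}$, gives $N_G^{\mathcal{F}}(x) \in F_K^n$, proving (i). For (ii) I would run the identical argument with $x \in F_L^{\psi(n)+1}$: now $\Tr_{L/K}(x) \in \mathfrak{P}_K^{n+1}$, $N_{L/K}(x) \in \mathfrak{P}_K^{n+1}$ so all norm powers land in $\mathfrak{P}_K^{n+1}$, and $x^2\mathcal{O}_L \subseteq \mathfrak{P}_L^{2\psi(n)+2}$ with $2\psi(n)+2 \geq \psi(n+1)+1 = \psi(n+2)+\dots$ — again using slope $\geq 1$ one sees $\Tr_{L/K}(x^2\mathcal{O}_L) \subseteq \mathfrak{P}_K^{n+1}$, so $N_G^{\mathcal{F}}(x) \in F_K^{n+1}$.

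The main obstacle I anticipate is purely bookkeeping: pinning down the exact ramification-theoretic inequalities among $\psi(n)$, $2\psi(n)$, $\psi(n+1)$ so that the error term $\Tr_{L/K}(x^2\mathcal{O}_L)$ and the higher norm powers $(N_{L/K}(x))^i$ are provably in the target ideal, and making sure the argument is uniform in the two cases $\ell = p$ and $\ell \neq p$ (in the tame case $\psi(n) = \ell n$-type formulas must be handled, though there the bounds are only easier). A secondary point needing a line of care is the convergence of the infinite sum $\sum a_i (N_{L/K}(x))^i$ and the assertion that an infinite $+_{\mathcal{F}}$-sum of elements of the subgroup $F_K^n$ again lies in $F_K^n$ — this follows from completeness of $\mathcal{O}_K$ and the fact that $F_K^{n+1}$ is $+_{\mathcal{F}}$-open in $F_K^n$, but it should be stated. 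Everything else is a direct translation of the classical $\mathbb{G}_a$ and $\mathbb{G}_m$ statements through Hazewinkel's formula.
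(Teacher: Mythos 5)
Your proposal follows essentially the same route as the paper: apply Hazewinkel's formula (Lemma \ref{lem:Hazewinkel}) and reduce both inclusions to the classical estimates for the trace and the norm in a totally ramified cyclic extension of prime degree; the paper carries out the trace estimate explicitly from \cite[Chapter V, \S 3, Lemma 4]{MR554237} via the case split $n\le t$, $n\ge t$, and bounds the norm term by the valuation identity $v_K(N_{L/K}(x))=v_L(x)\ge\psi(n)\ge n$, while you quote the corresponding containments for $\mathbb{G}_a$ and $\mathbb{G}_m$ directly. Your extra care with the error term $\Tr_{L/K}(x^2\mathcal{O}_L)$ and with the convergence of $\sum_i a_i(N_{L/K}(x))^i$ is welcome (the paper leaves this implicit in ``it suffices''), but the auxiliary inequality you invoke, $2\psi(n)\ge\psi(n+1)$, is false in general, and the ``slopes $\ge 1$'' argument does not give it: the slope of $\psi$ beyond the break is $\ell$, and for $n=t=1$ one has $2\psi(1)=2<\psi(2)=1+\ell$. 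Fortunately you never need this inequality: for (i) the error term only has to lie in $\mathfrak{P}_K^n$, which follows from $2\psi(n)\ge\psi(n)$ together with $\Tr_{L/K}(\mathfrak{P}_L^{\psi(n)})\subseteq\mathfrak{P}_K^n$, and for (ii) from $2\psi(n)+2\ge\psi(n)+1$ together with $\Tr_{L/K}(\mathfrak{P}_L^{\psi(n)+1})\subseteq\mathfrak{P}_K^{n+1}$. With that one-line repair your argument is correct and coincides in substance with the paper's proof.
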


\begin{proof}
	We only prove (i), the proof of (ii) being similar.
	Let $x \in \mathfrak P_L^{\psi(n)}$.
	By Lemma \ref{lem:Hazewinkel}
	it suffices to show that $v_K(\Tr_{L/K}(x)) \geq n$
	and $v_K(N_{L/K}(x)) \geq n$. As $L/K$ is totally ramified,
	we indeed have
	\[
		v_K(N_{L/K}(x)) = v_L(x) \geq \psi(n) \geq n,
	\]
	where the last inequality is \eqref{eqn:phi-psi-inequalities}.
	For the trace one knows by \cite[Chapter V, \S 3, Lemma 4]{MR554237} that
	\begin{equation} \label{eqn:trace-inequality}
		v_K(\Tr_{L/K}(x)) \geq
		\left\lfloor \frac{(t+1)(\ell-1) + \psi(n)}{\ell} \right\rfloor.
	\end{equation}
	If $n \leq t$ then $\psi(n) = n$ and so \eqref{eqn:trace-inequality}
	implies that
	\[
		v_K(\Tr_{L/K}(x)) \geq
		\left\lfloor \frac{(n+1)(\ell -1) + n}{\ell} \right\rfloor = 
		\left\lfloor \frac{n \ell + \ell - 1}{\ell} \right\rfloor = n.
	\]
	If on the other hand $n \geq t$ we have $\psi(n) = t + \ell (n-t)$.
	Thus \eqref{eqn:trace-inequality} simplifies to
	\[
	v_K(\Tr_{L/K}(x)) \geq
	\left\lfloor \frac{n \ell + \ell - 1}{\ell} \right\rfloor = n
	\]
	as desired.
\end{proof}

\begin{corollary} \label{cor:norm-F-equality}
	Let $\ell$ be a prime and suppose that $L/K$ is totally ramified 
	and cyclic of degree $\ell$. Then for every $n>t$ we have
	\[
		N_G^{\mathcal{F}}(F_L^{\psi(n)}) = F_K^n \quad \mbox{and} \quad
		N_G^{\mathcal{F}}(F_L^{\psi(n)+1}) =  F_K^{n+1}.	
	\]
\end{corollary}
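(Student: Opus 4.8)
The inclusions $\subseteq$ in both (i) and (ii) are Proposition \ref{prop:F-inclusions}, so the content is the reverse inclusions. The plan is to prove the reverse inclusion in (i) and then deduce (ii) from it. For the reduction of (ii): since $\psi$ is strictly increasing and carries integers to integers, $\psi(n+1) \geq \psi(n)+1$, hence $F_L^{\psi(n+1)} \subseteq F_L^{\psi(n)+1}$; as $n+1>t$ too, applying (i) to $n+1$ gives $F_K^{n+1} = N_G^{\mathcal{F}}(F_L^{\psi(n+1)}) \subseteq N_G^{\mathcal{F}}(F_L^{\psi(n)+1})$, which together with Proposition \ref{prop:F-inclusions}(ii) yields (ii).

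The heart of the matter is the reverse inclusion in (i), for which the crucial reduction is the congruence
\[
	N_G^{\mathcal{F}}(x) \equiv \Tr_{L/K}(x) \pmod{F_K^{n+1}}, \qquad x \in F_L^{\psi(n)},
\]
valid whenever $n>t$. To prove it I would apply Lemma \ref{lem:Hazewinkel} and estimate the three contributions. Since $L/K$ is totally ramified cyclic of prime degree $\ell$ with last jump $t$, one has $g_i = \ell$ for $i \leq t$ and $g_i = 1$ for $i>t$, so \eqref{eqn:phi-formula} gives $\psi(n) = t + \ell(n-t)$ for $n \geq t$. For $x \in \mathfrak{P}_L^{\psi(n)}$ we have $v_K(N_{L/K}(x)) = v_L(x) \geq \psi(n) = t+\ell(n-t)$, and $t+\ell(n-t) \geq n+1$ precisely because $n-t \geq 1$; hence $\sum_i a_i N_{L/K}(x)^i \in \mathfrak{P}_K^{n+1}$. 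By \cite[Chapter V, \S 3, Lemma 4]{MR554237}, $v_K(\Tr_{L/K}(x^2\mathcal{O}_L)) \geq \lfloor \tfrac{(t+1)(\ell-1) + 2\psi(n)}{\ell} \rfloor$, and substituting $\psi(n) = t+\ell(n-t)$ shows this lower bound is $\geq n+1$, again using $n-t \geq 1$. Combining the two estimates with Lemma \ref{lem:Hazewinkel} gives the congruence. I would also record the classical fact that $\Tr_{L/K}(\mathfrak{P}_L^{\psi(n)}) = \mathfrak{P}_K^n$ for $n>t$ (the case $\mathcal{F} = \mathbb{G}_a$ of the Corollary): it follows from the relation between the trace map and the different together with Hilbert's formula $v_L(\mathfrak{d}_{L/K}) = \sum_{i\geq 0}(g_i - 1) = (t+1)(\ell-1)$, since $\lfloor \tfrac{\psi(n) + (t+1)(\ell-1)}{\ell} \rfloor = n$.

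Granting these, the reverse inclusion in (i) follows by successive approximation. Given $y \in F_K^n = \mathfrak{P}_K^n$, choose $x_1 \in \mathfrak{P}_L^{\psi(n)} = F_L^{\psi(n)}$ with $\Tr_{L/K}(x_1) = y$; then $N_G^{\mathcal{F}}(x_1) \equiv y \pmod{F_K^{n+1}}$, so there is $y_1 \in F_K^{n+1}$ with $y = N_G^{\mathcal{F}}(x_1) +_{\mathcal{F}} y_1$. Since $n+1>t$, the same step applied to $y_1$ and $n+1$ produces $x_2 \in F_L^{\psi(n+1)} \subseteq F_L^{\psi(n)}$ with $y_1 = N_G^{\mathcal{F}}(x_2) +_{\mathcal{F}} y_2$, $y_2 \in F_K^{n+2}$, and iterating yields $x_j \in F_L^{\psi(n+j-1)}$ with $\psi(n+j-1) = \psi(n) + \ell(j-1) \to \infty$ and $y = N_G^{\mathcal{F}}(x_1) +_{\mathcal{F}} \dots +_{\mathcal{F}} N_G^{\mathcal{F}}(x_j) +_{\mathcal{F}} y_j$, $y_j \in F_K^{n+j}$. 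As $N_G^{\mathcal{F}}$ is a group homomorphism (each $\sigma \in G$ is a ring automorphism and $F$ is commutative and associative) and is continuous, the $\mathcal{F}$-sum $x := \sum_{j \geq 1}^{\mathcal{F}} x_j$ converges in $F_L^{\psi(n)}$ and $N_G^{\mathcal{F}}(x) = \lim_j (N_G^{\mathcal{F}}(x_1) +_{\mathcal{F}} \dots +_{\mathcal{F}} N_G^{\mathcal{F}}(x_j)) = y$. With Proposition \ref{prop:F-inclusions}(i) this proves (i). The main obstacle is the displayed congruence: one must verify that \emph{all} of Hazewinkel's correction terms, as well as the error term $\Tr_{L/K}(x^2\mathcal{O}_L)$, land in $F_K^{n+1}$, and it is exactly here that the hypothesis $n>t$ — not merely $n \geq t$ — is used.
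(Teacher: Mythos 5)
Your argument is correct, but at the decisive step it runs along a different track than the paper. The paper also starts from Proposition \ref{prop:F-inclusions} and reduces to surjectivity one ``graded level'' at a time, but it does so by identifying the induced maps $F_L^{\psi(m)}/F_L^{\psi(m)+1} \to F_K^m/F_K^{m+1}$ with the corresponding maps for $\mathbb{G}_m$ (via \cite[Chapter IV, Proposition 3.2]{MR2514094}), quoting the surjectivity of the unit norms on these quotients for $m>t$ \cite[Chapter V, \S 3, Corollary 2]{MR554237}, and then concluding by the completeness lemma \cite[Chapter V, \S 1, Lemma 2]{MR554237}; the deduction of the second equality from the first is identical to yours. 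You instead compare $N_G^{\mathcal{F}}$ with the \emph{additive} group: you push the Hazewinkel estimates of Proposition \ref{prop:F-inclusions} one level deeper to get $N_G^{\mathcal{F}}(x)\equiv \Tr_{L/K}(x) \bmod \mathfrak{P}_K^{n+1}$ for $x\in F_L^{\psi(n)}$, $n>t$ (your valuation computations are right, and this is exactly where $n>t$ rather than $n\geq t$ is needed), use the exact image $\Tr_{L/K}(\mathfrak{P}_L^{\psi(n)})=\mathfrak{P}_K^n$ coming from the different, and then carry out the successive approximation by hand instead of citing Serre's filtration lemma. What your route buys is self-containedness: only Lemma \ref{lem:Hazewinkel} and the classical trace/different computation are needed, and the role of the hypothesis $n>t$ becomes completely explicit; what the paper's route buys is brevity and the extra information that the formal-group norm agrees with the multiplicative norm on the graded quotients. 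One small point you should make explicit in a written version: passing from the additive congruence $N_G^{\mathcal{F}}(x_1)-y\in\mathfrak{P}_K^{n+1}$ to the existence of $y_1\in F_K^{n+1}$ with $y=N_G^{\mathcal{F}}(x_1)+_{\mathcal{F}}y_1$ uses that the formal difference of two elements has valuation at least that of their ordinary difference (because $F(X,Z)-F(Y,Z)$ is divisible by $X-Y$); the same fact underlies the convergence of your $\mathcal{F}$-series, but both verifications are routine.
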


\begin{proof}
	Proposition \ref{prop:F-inclusions} implies that the norm map
	$N_G^{\mathcal{F}}$ induces maps
	\[
	N_n^{\mathcal{F}}: F_L^{\psi(n)} / F_L^{\psi(n)+1}
	\longrightarrow F_K^n / F_K^{n+1}
	\]
	for every $n \in \N$.
	Now let $m \geq n > t$ be integers. Then by 
	\cite[Chapter IV, Proposition 3.2]{MR2514094} 
	we have a commutative diagram
	\[ \xymatrix{
		F_L^{\psi(m)} / F_L^{\psi(m)+1} \ar[rr]^{N_m^{\mathcal{F}}} 
			\ar[d]^{\simeq} & & F_K^m / F_K^{m+1} \ar[d]^{\simeq} \\
		U_L^{\psi(m)} / U_L^{\psi(m)+1} \ar[rr]^{N_m^{\mathbb{G}_m}} 
			& & U_K^m / U_K^{m+1}
	}\]
	The maps $N_m^{\mathbb{G}_m}$ are surjective 
	by \cite[Chapter V, \S 3, Corollary 2]{MR554237}.
	Thus the maps $N_m^{\mathcal{F}}$ are also surjective and likewise
	\[
		F_L^{\psi(m)} / F_L^{\psi(m+1)} \twoheadrightarrow
		F_L^{\psi(m)} / F_L^{\psi(m)+1} \twoheadrightarrow
		F_K^m / F_K^{m+1}
	\]
	for every $m \geq n > t$. Now \cite[Chapter V, \S 1, Lemma 2]{MR554237}
	implies that $N_G^{\mathcal{F}}: F_L^{\psi(n)} \rightarrow F_K^n$
	is surjective. The second equality follows from the first
	and Proposition \ref{prop:F-inclusions} via the chain of inclusions
	\[
		F_K^{n+1} = N_G^{\mathcal{F}}(F_L^{\psi(n+1)})
		\subseteq N_G^{\mathcal{F}}(F_L^{\psi(n)+1}) \subseteq F_K^{n+1}.
	\]
\end{proof}

\begin{corollary} \label{cor:better-norm-F}
	Let $\ell$ be a prime and suppose that $L/K$ is totally ramified 
	and cyclic of degree $\ell$. Then for every $v>t$, $v \in \R$ we have
	\[
	N_G^{\mathcal{F}}(F_L^{\lceil\psi(v)\rceil}) = F_K^{\lceil v \rceil}.	
	\]
\end{corollary}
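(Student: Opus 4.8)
The plan is to deduce Corollary~\ref{cor:better-norm-F} from Corollary~\ref{cor:norm-F-equality} by reducing the real parameter $v$ to the integer case and then interpolating using the piecewise-linear structure of $\psi$. Write $n := \lceil v \rceil$, so $n > t$ is an integer and $n - 1 < v \leq n$. The two obvious inclusions to chase are: first, monotonicity of the filtration together with $\psi$ increasing gives $F_L^{\lceil \psi(v) \rceil} \subseteq F_L^{\lceil \psi(n-1)\rceil + 1}$ is false in general, so one must be more careful about where $\lceil \psi(v) \rceil$ sits relative to $\psi(n-1)$ and $\psi(n)$.

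The key observation I would use is that since $n > t$, the function $\psi$ is affine on the interval $[n-1, n]$ with slope $\ell$ (recall $\psi(s) = t + \ell(s - t)$ for $s \geq t$, and $n - 1 \geq t$). Hence $\psi(n) = \psi(n-1) + \ell$, and more precisely $\psi(v) = \psi(n-1) + \ell(v - (n-1))$, so $\psi(v) \in (\psi(n-1), \psi(n)]$. Therefore $\lceil \psi(v) \rceil$ is an integer lying in $\{\psi(n-1)+1, \psi(n-1)+2, \dots, \psi(n)\}$ (using that $\psi(n-1)$ and $\psi(n)$ are integers, by the cited Proposition~13 of Serre). In particular $\psi(n-1) + 1 \leq \lceil \psi(v)\rceil \leq \psi(n)$, which gives the filtration inclusions
\[
	F_L^{\psi(n)} \subseteq F_L^{\lceil \psi(v) \rceil} \subseteq F_L^{\psi(n-1)+1}.
\]
Applying $N_G^{\mathcal F}$ and using Corollary~\ref{cor:norm-F-equality} for both $n$ and $n-1$ (note $n - 1 > t - 1$, but we actually need $n-1 \geq t$; if $n - 1 = t$ one should instead invoke part~(ii) of Proposition~\ref{prop:F-inclusions} directly with the endpoint $n-1 = t$, or simply note $N_G^{\mathcal F}(F_L^{\psi(t)+1}) \subseteq F_K^{t+1}$), we get
\[
	F_K^n = N_G^{\mathcal F}(F_L^{\psi(n)}) \subseteq N_G^{\mathcal F}(F_L^{\lceil \psi(v)\rceil}) \subseteq N_G^{\mathcal F}(F_L^{\psi(n-1)+1}) = F_K^{(n-1)+1} = F_K^n,
\]
so all terms are equal, proving $N_G^{\mathcal F}(F_L^{\lceil \psi(v)\rceil}) = F_K^n = F_K^{\lceil v \rceil}$.

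The main obstacle I anticipate is the boundary bookkeeping at $v$ near $t$: one needs $n - 1 \geq t$ (equivalently $v > t$ forces $\lceil v \rceil - 1 \geq t$, which holds since $\lceil v \rceil > t$ and both are integers, so $\lceil v \rceil \geq t+1$, i.e. $\lceil v\rceil - 1 \geq t$ — fine) and one must be sure that Corollary~\ref{cor:norm-F-equality} applies to the index $n - 1$, i.e. that $n - 1 > t$ is not actually required but only $n-1 \geq t$; inspecting its proof, the inequality $m \geq n > t$ is what is used, and the endpoint $m = t$ is covered by Proposition~\ref{prop:F-inclusions}(ii), so the statement $N_G^{\mathcal F}(F_L^{\psi(m)+1}) = F_K^{m+1}$ in fact holds for all $m \geq t$; I would add a remark to that effect or re-derive the needed inclusion $F_K^n \subseteq N_G^{\mathcal F}(F_L^{\psi(n-1)+1})$ directly from surjectivity of $N_G^{\mathcal F}: F_L^{\psi(n)} \to F_K^n$ combined with $\psi(n) \geq \psi(n-1)+1$. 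Either way the argument is short once the location of $\lceil \psi(v)\rceil$ between the consecutive integers $\psi(n-1)$ and $\psi(n) = \psi(n-1)+\ell$ is pinned down.
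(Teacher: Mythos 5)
Your argument is correct and is essentially the paper's proof: the paper simply declares the proof ``completely analogous'' to \cite[Chapter V, \S 3, Corollary 4]{MR554237} using Proposition \ref{prop:F-inclusions} and Corollary \ref{cor:norm-F-equality}, and the interpolation you spell out --- setting $n=\lceil v\rceil$ and locating $\lceil\psi(v)\rceil$ between $\psi(n-1)+1$ and $\psi(n)$, then sandwiching $N_G^{\mathcal F}(F_L^{\lceil\psi(v)\rceil})$ --- is exactly that argument. Your worry about the endpoint $n-1=t$ is harmless, because the only ingredient needed from the right end of your chain is the containment $N_G^{\mathcal F}(F_L^{\psi(n-1)+1})\subseteq F_K^{n}$, which is Proposition \ref{prop:F-inclusions}(ii) (valid for every level, and trivial when $n-1=0$), not the full equality of Corollary \ref{cor:norm-F-equality} at $m=n-1$.
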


\begin{proof}
	The proof is completely 
	analogous to \cite[Chapter V, \S 3, Corollary 4]{MR554237}
	using Proposition \ref{prop:F-inclusions} and Corollary
	\ref{cor:norm-F-equality}.
\end{proof}

\begin{corollary} \label{cor:F-inclusions}
	Suppose that $L/K$ is totally ramified. 
	Then for every $n \in \N$ we have
	inclusions
	\[
		N_G^{\mathcal{F}}(F_L^{\psi(n)}) \subseteq F_K^n 
		\quad \mbox{and} \quad
		N_G^{\mathcal{F}}(F_L^{\psi(n)+1}) \subseteq F_K^{n+1}.
	\]
\end{corollary}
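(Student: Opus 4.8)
The plan is to deduce this from Proposition \ref{prop:F-inclusions} by a dévissage along a composition series of $G = \Gal(L/K)$, using the transitivity of the norm map for formal groups together with the transitivity of the function $\psi$ in towers. I would argue by induction on $|G|$. The case $|G| = 1$ is trivial, so assume $|G|>1$. Since $L/K$ is totally ramified, $G = G_0$ is solvable (its ramification filtration has abelian quotients, as recalled above), so there is a normal subgroup $N \triangleleft G$ with $G/N$ cyclic of prime order $\ell$ (take the preimage in $G$ of a maximal subgroup of the nontrivial abelian group $G/[G,G]$). Put $M := L^N$. Then $M/K$ is Galois with group $G/N \cong \Z/\ell\Z$, and $L/M$ is Galois with group $N$; moreover both extensions are again totally ramified, since a subextension or a quotient of a totally ramified extension of local fields is totally ramified, and $|N| < |G|$.

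The key structural input is transitivity of the norm map. Choosing coset representatives for $G/N$, one checks that for $x \in F_L^1$ the partial norm $N_N^{\mathcal{F}}(x) := \sum_{\nu \in N}^{\mathcal{F}} \nu(x)$ is fixed by $N$ (by commutativity of $F$ and since $\nu \mapsto \nu'\nu$ permutes $N$), hence lies in $(F_L^1)^N = F_M^1$ by Lemma \ref{lem:F_invariants}, and that
\[
	N_G^{\mathcal{F}} = N_{G/N}^{\mathcal{F}} \circ N_N^{\mathcal{F}},
\]
where $N_{G/N}^{\mathcal{F}} : F_M^1 \to F_K^1$ is the norm map of $M/K$ (a lift of $\bar\tau \in G/N$ normalizes $N$, hence induces $\bar\tau$ on $M$, and the iterated formal sum is rearranged just as in the ordinary case). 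Combining this with the transitivity $\psi_{L/K} = \psi_{L/M} \circ \psi_{M/K}$ from \cite[Chapter IV, \S 3]{MR554237} and the fact that $\psi$ carries positive integers to positive integers finishes the proof: setting $n' := \psi_{M/K}(n) \in \N$, the inductive hypothesis applied to $L/M$ gives
\[
	N_N^{\mathcal{F}}(F_L^{\psi_{L/K}(n)}) = N_N^{\mathcal{F}}(F_L^{\psi_{L/M}(n')}) \subseteq F_M^{n'}, \qquad
	N_N^{\mathcal{F}}(F_L^{\psi_{L/K}(n)+1}) \subseteq F_M^{n'+1},
\]
and Proposition \ref{prop:F-inclusions} applied to the prime-degree extension $M/K$ gives $N_{G/N}^{\mathcal{F}}(F_M^{n'}) \subseteq F_K^n$ and $N_{G/N}^{\mathcal{F}}(F_M^{n'+1}) \subseteq F_K^{n+1}$; composing yields the two claimed inclusions.

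I expect the only delicate point to be the verification of the transitivity of $N_G^{\mathcal{F}}$: that the iterated formal sum is independent of the choice of coset representatives and that $N_N^{\mathcal{F}}(x)$ is genuinely $N$-invariant. Everything else is bookkeeping with the standard behaviour of $\psi$ in towers. This is the exact analogue of the proof of \cite[Chapter V, \S 6, Proposition 8]{MR554237} for $\mathbb{G}_m$, the only change being that ordinary sums are replaced by formal sums; the rearrangements used there go through verbatim because $F$ is commutative and associative.
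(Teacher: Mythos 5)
Your proposal is correct and is essentially the paper's own argument: the paper proves this corollary in one line, saying that since $L/K$ is solvable it follows from Proposition \ref{prop:F-inclusions} by induction, which is exactly the dévissage you carry out. The details you supply (transitivity $N_G^{\mathcal{F}} = N_{G/N}^{\mathcal{F}} \circ N_N^{\mathcal{F}}$, which holds because the coefficients of $F$ lie in $\mathcal{O}_K$ and are Galois-fixed, together with $\psi_{L/K} = \psi_{L/M} \circ \psi_{M/K}$) are precisely the implicit content of that induction.
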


\begin{proof}
	As the Galois extension $L/K$ is solvable, this follows from
	Proposition \ref{prop:F-inclusions} by induction.
\end{proof}

\subsection{The cohomology of the associated groups}

We start with the following auxiliary result.

\begin{lemma} \label{lem:Herbrand}
	Suppose that $L/K$ is cyclic of prime degree.
	Then $h(F_L^n) = 1$ for every $n \in \N$.
\end{lemma}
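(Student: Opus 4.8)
The strategy is to reduce to the two formal groups we already understand. First, since $L/K$ is cyclic of prime degree $\ell$, there are only two cases: either $\ell \neq p$, in which case $L/K$ is tamely ramified, or $\ell = p$. In the tame case the proof of Proposition \ref{prop:tamely-ramified-1} already shows $U_L^n$ is cohomologically trivial, and the same filtration argument (using that $F_L^n/F_L^{n+1} \cong \mathfrak{P}_L^n / \mathfrak{P}_L^{n+1} \cong \lambda$ as $G$-modules, by \cite[Chapter IV, Proposition 3.2]{MR2514094}, and that the cohomology of $\lambda$ vanishes when $G$ is tame) shows $F_L^n$ is cohomologically trivial as well; in particular $h(F_L^n) = 1$. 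So I would dispose of the tame case first with a short paragraph and then assume $L/K$ is totally ramified, cyclic of degree $p$.

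In the wildly ramified case the plan is to compute the two sides of the Herbrand quotient directly from the results just established for the norm map. By Corollary \ref{cor:better-norm-F}, for any real $v > t$ we have $N_G^{\mathcal{F}}(F_L^{\lceil \psi(v) \rceil}) = F_K^{\lceil v \rceil}$. Combined with Lemma \ref{lem:F_invariants}, which gives $(F_L^n)^G = F_K^{1 + \lfloor (n-1)/p \rfloor}$, one gets that $H^0(G, F_L^n)$ is the quotient $F_K^{1+\lfloor (n-1)/p\rfloor} / N_G^{\mathcal{F}}(F_L^n)$, and using $\phi(n) = (p+n-1)/p$ for $n \geq 1$ in the degree-$p$ totally ramified case (exactly as in the computation inside the proof of Proposition \ref{prop:weakly-implies-ct}) one can pin down $N_G^{\mathcal{F}}(F_L^n) = F_K^{\lceil \phi(n) \rceil} = F_K^{1 + \lceil (n-1)/p \rceil}$. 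Hence $|H^0(G, F_L^n)| = |F_K^{1+\lfloor(n-1)/p\rfloor} / F_K^{1+\lceil(n-1)/p\rceil}|$, which is $1$ if $p \mid n-1$ and $q_K$ (the residue cardinality of $K$) otherwise. The denominator $|H^1(G,F_L^n)| = |\ker N_G^{\mathcal{F}} / \Delta(G) F_L^n|$ should then be computed either by a parallel direct argument or — more cleanly — by a counting identity: the Herbrand quotient is multiplicative in short exact sequences, so I would instead pass through the filtration $F_L^n \supseteq F_L^{n+1} \supseteq \cdots$, whose successive quotients are all $\cong \lambda$, a finite module, hence have trivial Herbrand quotient. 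The subtlety is that this filtration is infinite; the right move is to compare $F_L^n$ with $F_L^1$ through finitely many steps and invoke that the "tail" $F_L^N$ for large $N$ has a known Herbrand quotient.

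Actually the cleanest route, and the one I would write up, is: $h(F_L^n) = h(F_L^{n+1}) \cdot h(F_L^n / F_L^{n+1}) = h(F_L^{n+1}) \cdot h(\lambda) = h(F_L^{n+1})$, since $\lambda$ is finite and finite modules have Herbrand quotient $1$ by \cite[Chapter VIII, \S 4, Proposition 8]{MR554237}. So $h(F_L^n)$ is independent of $n$. It therefore suffices to evaluate $h(F_L^N)$ for one convenient $N$ — and for $N$ large enough that $N \equiv 1 \bmod p$ and $N > t+1$, the computation above gives $|H^0(G, F_L^N)| = 1$; one then needs $|H^1(G,F_L^N)| = 1$ as well. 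For this I would note that for such large $N$ the $p$-adic logarithm/formal logarithm of $\mathcal{F}$ gives a $\Z_p[G]$-isomorphism $F_L^N \cong \mathfrak{P}_L^N = \mathbb{G}_a(\mathfrak{P}_L^N)$ (the formal logarithm has coefficients in $K$ and converges on a sufficiently deep ideal), reducing to $h(\mathfrak{P}_L^N)$; and $\mathfrak{P}_L^N$ is a free $\mathcal{O}_K$-module of rank $[L:K]$ hence $\cong \mathcal{O}_K[G]$ as a $\Z_p[G]$-module after choosing a normal integral basis element scaled into $\mathfrak{P}_L^N$ — which is cohomologically trivial, so both $H^0$ and $H^1$ vanish and $h = 1$.

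The main obstacle I anticipate is the clean handling of the infinite filtration: one must be sure that "$h(F_L^n) = h(F_L^{n+1})$" can be iterated and linked to a concrete $F_L^N$ without circularity, and that the formal logarithm argument (or a normal-basis argument) for large $N$ is legitimately available. An alternative that sidesteps the logarithm is to prove $|H^1(G, F_L^n)| = q_K$ when $p \nmid n-1$ and $=1$ when $p \mid n-1$ by a direct computation of $\ker N_G^{\mathcal{F}}$ and $\Delta(G) F_L^n$ mirroring Corollary \ref{cor:Herbrand}'s bookkeeping, so that numerator and denominator always coincide and $h(F_L^n) = 1$ term by term; but this is more calculation-heavy, so I would lead with the filtration-plus-large-$N$ argument and fall back on the direct computation only if the reduction step proves awkward.
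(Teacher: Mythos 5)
Your reduction steps are the same as the paper's (the finite quotients $F_L^n/F_L^{n+1}$ give that $h(F_L^n)$ is independent of $n$, and the formal logarithm identifies $F_L^N$ for large $N$ with a group you already understand), but the final evaluation of $h(F_L^N)$ in the wild case contains two genuine errors, both coming from the same oversight: the lemma must cover totally ramified cyclic degree-$p$ extensions whose last ramification jump $t$ is bigger than $1$, and for those \emph{neither} $H^0$ nor $H^1$ of $F_L^N$ vanishes (they are merely of equal order). First, your computation $N_G^{\mathcal{F}}(F_L^n)=F_K^{1+\lceil (n-1)/p\rceil}$ uses $\phi(n)=(p+n-1)/p$, which is the formula for the weakly ramified case $t=1$ only; in general $\phi(n)=t+(n-t)/p$ for $n\geq t$, so for example in $\Q_2(\sqrt{2})/\Q_2$ (where $t=2$) one gets $(F_L^3)^G=F_K^{2}$ but $N_G^{\mathcal{F}}(F_L^3)\subseteq F_K^{3}$, and $H^0(G,F_L^N)\neq 1$ for every $N$. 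Second, the claim that a scaled normal basis element makes $\mathfrak{P}_L^N\cong\mathcal{O}_K[G]$, so that ``both $H^0$ and $H^1$ vanish,'' is false: by K\"ock's theorem \cite[Theorem 1.1]{MR2089083}, quoted in the paper, $\mathfrak{P}_L^N$ is cohomologically trivial only when $L/K$ is weakly ramified and $N\equiv 1 \bmod g_1$; a scaled normal basis element only generates a $G$-stable sublattice of finite index, not all of $\mathfrak{P}_L^N$. So the write-up you propose would prove statements that are simply untrue in the hardest case the lemma covers.

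The repair is small and is exactly what the paper does: after reducing to large $N$, do not aim at cohomological triviality but only at $h=1$. The paper composes the formal logarithm of $\mathcal{F}$ with the exponential of $\mathbb{G}_m$ to get $F_L^N\simeq U_L^N$ and then quotes $h(U_L^N)=h(U_L)=1$ from Corollary \ref{cor:Herbrand} (class field theory), which holds for any totally ramified cyclic extension regardless of the jump $t$; it handles the tame case by Theorem \ref{thm:units-ct}. Alternatively, your route through $\mathbb{G}_a$ can be saved by replacing the false isomorphism with a commensurability argument: $\mathcal{O}_K[G]\alpha\subseteq\mathfrak{P}_L^N$ has finite index, finite modules have trivial Herbrand quotient, and $h(\mathcal{O}_K[G])=1$, hence $h(\mathfrak{P}_L^N)=1$ --- but you must drop the claims that $H^0(G,F_L^N)$ and $H^1(G,F_L^N)$ themselves vanish. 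Your treatment of the tame case is fine.
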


\begin{proof}
		As the index of $F_L^{n+1}$ in $F_L^n$ is finite, the Herbrand
		quotient $h(F_L^n)$ does not depend on $n$. So we may assume
		that $n$ is sufficiently large such that the formal
		logarithms of $\mathcal{F}$ and $\mathbb G_m$ induce
		an isomorphism $F_L^n \simeq U_L^n$. If $L/K$ is
		tamely ramified, we have $h(U_L^n) = 1$ by Theorem
		\ref{thm:units-ct}. If $L/K$ is wildly ramified, then it is
		totally ramified and thus $h(U_L^n) = h(U_L) = 1$
		by Corollary \ref{cor:Herbrand}.
\end{proof}

\begin{prop} \label{prop:F-tamely-ramified}
	Let $L/K$ be tamely ramified. Then the associated groups
	$F_L^n$ are cohomologically trivial for every $n \in \N$.
\end{prop}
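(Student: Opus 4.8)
The plan is to mimic the proof of Proposition~\ref{prop:tamely-ramified-1}, reducing to the case where $G$ is a $p$-group and the extension is unramified, and then to transport the cohomological triviality of $U_L^n$ across the formal logarithm. First I would note that $F_L^n = \mathcal{F}(\mathfrak{P}_L^n)$ is a pro-$p$-group, since its successive quotients $F_L^n / F_L^{n+1}$ are isomorphic to $\mathfrak{P}_L^n / \mathfrak{P}_L^{n+1} \simeq \lambda$ as groups (the formal group law reduces to addition modulo degree $2$), hence elementary abelian $p$-groups. Consequently, for any subgroup $U \leq G$ the cohomology $H^q(U, F_L^n)$ is $p$-torsion, and it vanishes as soon as it vanishes on a $p$-Sylow subgroup of $U$. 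Since $L/K$ is tamely ramified, any $p$-subgroup of $G$ meets the inertia group $I$ in a $p$-subgroup of $I$, which is trivial because $G_1 = 1$ means $I$ has order prime to $p$. Therefore every $p$-subgroup of $G$ maps injectively to $G/I = \Gal(\lambda/\kappa)$, i.e.\ corresponds to an unramified subextension. So we may and do assume that $G$ is a $p$-group and that $L/K$ is unramified.

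In the unramified case the key point is that $\mathfrak{P}_L^n = \pi^n \mathcal{O}_L$ for a uniformizer $\pi \in \mathcal{O}_K$ (since $e = 1$), and $\mathcal{O}_L$ is a free $\mathcal{O}_K[G]$-module — indeed, by the normal basis theorem for the residue extension together with Nakayama, $\mathcal{O}_L$ is $\mathcal{O}_K[G]$-free of rank $1$, hence so is $\mathfrak{P}_L^n = \pi^n \mathcal{O}_L \cong \mathcal{O}_L$. Thus the additive group $\mathbb{G}_a(\mathfrak{P}_L^n) = \mathfrak{P}_L^n$ is an induced (even free) $G$-module and is cohomologically trivial. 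For a general formal group I would now filter $F_L^n$ by the subgroups $F_L^m$ for $m \geq n$, which are $G$-stable, with quotients $F_L^m / F_L^{m+1} \simeq \lambda$ $G$-equivariantly (the isomorphism $x \mapsto x \bmod \mathfrak{P}_L^{m+1}$ is Galois-equivariant and is a group homomorphism since $F(X,Y) \equiv X+Y$ modulo degree $2$). Since $\lambda$ is itself $G$-cohomologically trivial in the unramified case (it is $\mathbb{G}_a(\mathfrak{P}_L^m)/\mathbb{G}_a(\mathfrak{P}_L^{m+1})$, a quotient of free modules, or directly: $\lambda$ is $\kappa[G]$-free by the normal basis theorem), an induction on the filtration together with the long exact cohomology sequence gives $H^q(G, F_L^n / F_L^m) = 0$ for all $m > n$ and all $q$. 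Passing to the limit — here one uses that Tate cohomology of a finite group commutes with the relevant inverse limit, or more simply that $F_L^n = \varprojlim F_L^n/F_L^m$ with surjective transition maps having finite kernels, so $H^q(G, F_L^n) = \varprojlim H^q(G, F_L^n/F_L^m) = 0$ — yields cohomological triviality of $F_L^n$ over $G$, and hence over every subgroup by the reduction above.

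Alternatively, and perhaps more cleanly, I would avoid the limit argument by choosing $N$ large enough that the formal logarithm $\log_{\mathcal{F}}$ converges and induces a $\Z_p[G]$-isomorphism $F_L^N \simeq \mathfrak{P}_L^N = \mathbb{G}_a(\mathfrak{P}_L^N)$; the latter is cohomologically trivial by the free-module argument above (now over the original $G$, not just a $p$-group — freeness of $\mathcal{O}_L$ over $\mathcal{O}_K[G]$ in the unramified case does not require $G$ to be a $p$-group). Then for $n < N$ the quotient $F_L^n / F_L^N$ is a finite $p$-group with a $G$-stable filtration whose graded pieces are copies of $\lambda$, which is $\kappa[G]$-free, hence cohomologically trivial; so $F_L^n / F_L^N$ is cohomologically trivial by dévissage, and then the short exact sequence $0 \to F_L^N \to F_L^n \to F_L^n/F_L^N \to 0$ shows $F_L^n$ is cohomologically trivial. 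The main obstacle I anticipate is making the reduction to the unramified case fully rigorous: one must check that cohomological triviality over $G$ can be tested on $p$-subgroups (this is standard: $H^q(U,M)$ is killed by $|U|$ and its $p$-primary part injects into $H^q(U_p, M)$ for $U_p$ a $p$-Sylow of $U$), and that every $p$-subgroup of a tamely ramified $G$ is the Galois group of an unramified subextension, so that Lemma~\ref{lem:spectral-argument} is not even needed — one simply restricts. The convergence range and $G$-equivariance of the formal logarithm is the only other point requiring care, but that is entirely standard.
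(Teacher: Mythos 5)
Your proof is correct, but it takes a genuinely different route from the paper's. The paper reduces to $F_L^1$ by combining K\"ock's theorem (cohomological triviality of the ideals $\mathfrak{P}_L^m$ in tame extensions) with the identifications $F_L^m/F_L^{m+1}\simeq \mathfrak{P}_L^m/\mathfrak{P}_L^{m+1}$, then uses solvability and Lemma~\ref{lem:spectral-argument} to pass to a cyclic extension of prime degree, where Hazewinkel's surjectivity of the norm map $N_G^{\mathcal{F}}\colon F_L^1\to F_K^1$ kills $H^0$ and the Herbrand quotient computation of Lemma~\ref{lem:Herbrand} (itself proved via the formal logarithm at high level) finishes. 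You instead use tameness only through the Sylow reduction: $F_L^n$ is pro-$p$, so the cohomology over any subgroup $U$ is $p$-primary and restricts injectively to a $p$-Sylow, and since $G_1=1$ and $H_i=H\cap G_i$, every $p$-subgroup cuts out an unramified subextension; in the unramified case $\mathcal{O}_L$ is $\mathcal{O}_K[G]$-free (normal basis plus Nakayama), so $\mathfrak{P}_L^N=\pi^N\mathcal{O}_L$ is induced hence cohomologically trivial, the formal logarithm gives a $G$-isomorphism $F_L^N\simeq\mathfrak{P}_L^N$ for $N$ large, and a finite d\'evissage with graded pieces $\lambda$ (free over $\kappa[G]$) handles $F_L^n/F_L^N$. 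Your route avoids both K\"ock's theorem and Hazewinkel's norm-surjectivity result, relying only on classical facts, and your second variant sidesteps the inverse-limit commutation that the first variant would need to justify (standard for systems of finite quotients, but worth stating); the paper's route, by contrast, parallels the proof of Proposition~\ref{prop:tamely-ramified-1}, treats the tame totally ramified case directly without Sylow restriction, and reuses Lemma~\ref{lem:Herbrand}, which is needed elsewhere anyway. Both arguments use the characteristic-$0$ hypothesis through the formal logarithm, so neither extends as written to positive characteristic.
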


\begin{proof}
	If $L/K$ is tamely ramified, then the ideals
	$\mathfrak P_L^n$ are cohomologically trivial by
	\cite[Theorem 1.1]{MR2089083}. As we have isomorphisms
	$F_L^n / F_L^{n+1} \simeq \mathfrak P_L^n / \mathfrak P_L^{n+1}$
	for every $n \in \N$, it suffices to show that $F_L^1$ is
	cohomologically trivial. 
	By Lemma \ref{lem:spectral-argument} we may suppose that $L/K$
	is cyclic of prime degree.
	The norm map 
	$N_G^{\mathcal{F}}: F_L^1 \rightarrow F_K^1$ is surjective
	by \cite[Proposition 3.1]{MR0349692} and thus
	$H^0(G, F_L^1)$ vanishes. Now the result follows from
	Lemma \ref{lem:Herbrand}.
\end{proof}

\begin{prop} \label{prop:F-weakly-I}
	Let $L/K$ be weakly ramified and let $n>1$ be an integer
	such that $n \equiv 1 \mod g_1$.
	Then $F_L^n$ is cohomologically trivial.
\end{prop}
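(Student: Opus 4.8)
The plan is to mimic the proof of Proposition~\ref{prop:weakly-implies-ct} as closely as possible, substituting the formal-group norm theory developed in this section for the classical facts about $U_L^n$. First I would reduce to the essential case. Since $F_L^n$ is cohomologically trivial as a $G$-module if and only if it is cohomologically trivial as an $H$-module for every subgroup $H \leq G$, and since for any subgroup $H$ the extension $L/L^H$ is again weakly ramified with $n \equiv 1 \bmod |H_1|$, it suffices to prove that $H^i(G,F_L^n)=0$ for all $i$. As $G$ is solvable and $F_L^n/F_L^{n+1}$ is a quotient of $\mathbb G_a(\mathfrak P_L^n)$-type modules whose cohomology is controlled by Proposition~\ref{prop:F-tamely-ramified} through the tame part, Lemma~\ref{lem:spectral-argument} lets me peel off the tame quotient and assume $G = G_1$ is a $p$-group, hence further (inducting up a composition series with Lemma~\ref{lem:spectral-argument} and Proposition~\ref{prop:F-tamely-ramified}) that $G$ is cyclic of order $p$ and $L/K$ is totally ramified.

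In this reduced situation, weak ramification forces the last ramification jump to be $t=1$, so $G=G_{-1}=G_0=G_1$ and $\psi(s) = 1 + p(s-1)$ for $s \geq 1$. The key computation is then
\[
	N_G^{\mathcal F}(F_L^n) = N_G^{\mathcal F}\bigl(F_L^{\lceil \psi(\phi(n))\rceil}\bigr)
	= F_K^{\lceil \phi(n)\rceil}
	= F_K^{1 + \lceil \frac{n-1}{p}\rceil}
	= F_K^{1 + \lfloor \frac{n-1}{p}\rfloor}
	= (F_L^n)^G,
\]
where the first step uses that $\psi(\phi(n)) = n$ with $n$ an integer $>1>t=1$ so that $\phi(n)>t$, the second is Corollary~\ref{cor:better-norm-F} applied with $v = \phi(n) > t$, the third uses the explicit formula $\phi(n) = t + (n-t)/p = 1 + (n-1)/p$, the fourth holds because $n \equiv 1 \bmod p$, and the last is Lemma~\ref{lem:F_invariants}. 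This shows $H^0(G,F_L^n) = (F_L^n)^G / N_G^{\mathcal F}(F_L^n)$ vanishes.

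To conclude cohomological triviality for a cyclic group it then suffices to check that the Herbrand quotient is trivial, and this is exactly Lemma~\ref{lem:Herbrand}: $h(F_L^n)=1$ for every $n$ when $L/K$ is cyclic of prime degree. Since $H^0(G,F_L^n)=0$ and $h(F_L^n) = |H^0(G,F_L^n)|/|H^1(G,F_L^n)| = 1$, we get $H^1(G,F_L^n)=0$, and then the $2$-periodicity of Tate cohomology of a cyclic group gives $H^i(G,F_L^n)=0$ for all $i \in \Z$, completing the proof.

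The main obstacle I anticipate is making sure the reduction steps in the first paragraph are genuinely valid: one must verify that the quotients $F_L^n/F_L^{n+1}$ that appear when applying Lemma~\ref{lem:spectral-argument} to peel off subgroups have vanishing cohomology over the relevant subquotient groups, using the isomorphisms $F_L^n/F_L^{n+1} \simeq \mathfrak P_L^n/\mathfrak P_L^{n+1} \simeq \mathbb G_a(\mathfrak P_L^n)/\mathbb G_a(\mathfrak P_L^{n+1})$ together with Proposition~\ref{prop:F-tamely-ramified}, and that the congruence $n \equiv 1 \bmod g_1$ descends correctly to each subgroup. Once the cyclic-of-order-$p$, totally-ramified, weakly-ramified case is isolated, the argument is the verbatim formal-group analogue of Proposition~\ref{prop:weakly-implies-ct} and should be routine.
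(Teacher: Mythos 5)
Your proof is correct and follows essentially the same route as the paper's: reduce via Lemma~\ref{lem:spectral-argument} and Proposition~\ref{prop:F-tamely-ramified} to the totally ramified cyclic case of order $p$, show $N_G^{\mathcal F}(F_L^n) = (F_L^n)^G$ using Corollary~\ref{cor:better-norm-F} and Lemma~\ref{lem:F_invariants} together with $n \equiv 1 \bmod p$, and conclude with the Herbrand quotient Lemma~\ref{lem:Herbrand} and periodicity. You even spell out the choice $v=\phi(n)$ and the fact that the last jump is $t=1$, which the paper leaves implicit (only note the typo ``$n>1>t=1$'', which should read $n>1=t$).
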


\begin{proof}
	By Proposition \ref{prop:F-tamely-ramified}
	and Lemma \ref{lem:spectral-argument} we may and do
	assume that $L/K$ is cyclic of order $p$ and totally ramified. 
	We then have
	\[
		(F_L^n)^G = F_K^{1 + \lfloor \frac{n-1}{p} \rfloor} =
		F_K^{1 + \lceil \frac{n-1}{p} \rceil}
		= N_G^{\mathcal{F}}(F_L^n).
	\]	 
	Here, the first and last equality follow from
	Lemma \ref{lem:F_invariants} and  
	Corollary \ref{cor:better-norm-F}, respectively. 
	As $n \equiv 1 \mod p$, the remaining equality is also clear.
	We obtain $H^0(G,F_L^n) = 1$, and
	Lemma \ref{lem:Herbrand} again implies the result.
\end{proof}

We are now in a position to state and prove the main result of this section.

\begin{theorem} \label{thm:main-result}
	Let $L/K$ be a finite Galois extension of local fields of characteristic
	$0$ with Galois group $G$. Let $\mathcal{F}$ be a formal group
	over $\mathcal{O}_K$ with formal group law $F$ and let $n>1$ 
	be an integer. Then $F_L^n$ is a cohomologically trivial
	$G$-module if and only if $L/K$ is weakly ramified and
	$n \equiv 1 \mod g_1$.
\end{theorem}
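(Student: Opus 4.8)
The plan is to deduce Theorem \ref{thm:main-result} from the intermediate results already assembled, exactly mirroring the structure of the proof of Theorem \ref{thm:units-ct}. The ``if'' direction is essentially done: if $L/K$ is weakly ramified and $n \equiv 1 \bmod g_1$, then $F_L^n$ is cohomologically trivial by Proposition \ref{prop:F-weakly-I}. So the real content is the converse: assuming $F_L^n$ is a cohomologically trivial $G$-module, we must show $L/K$ is at most weakly ramified and $n \equiv 1 \bmod g_1$.

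For the converse I would follow the strategy of Proposition \ref{prop:ct-implies-weak} verbatim, replacing $U_L^n$ by $F_L^n$ throughout. First, cohomological triviality as a $G$-module implies cohomological triviality as a $G_1$-module, so we may assume $G = G_1$ is a nontrivial $p$-group, say $|G| = p^k$, and then (after passing to a subgroup and using $H_i = H \cap G_i$) that $k = 1$ and $L/K$ is totally ramified cyclic of degree $p$ with $G = G_0 = \dots = G_t$, $G_{t+1} = 1$ for some $t \geq 1$. The key point is that the chain of inclusions
\[
	N_G^{\mathcal{F}}(F_L^n) \subseteq N_G^{\mathcal{F}}(F_L^{\psi(m)+1})
	\subseteq F_K^{m+1} \subseteq F_K^{1 + \lfloor \frac{n-1}{p} \rfloor},
\]
with $m := \lfloor \phi(n-1) \rfloor$, goes through just as before: the first inclusion uses $\psi(m)+1 \leq n$, the second is Corollary \ref{cor:F-inclusions}(ii) in the totally ramified case (the formal-group analogue of \cite[Chapter V, \S 6, Proposition 8]{MR554237} — this is the one place where I'd need to check that Corollary \ref{cor:F-inclusions} genuinely supplies what is needed, or else prove the missing inclusion directly via Lemma \ref{lem:Hazewinkel}), and the last uses the inequality \eqref{eqn:m-inequality}, which is purely about ramification numbers and hence unchanged. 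Then $H^0(G, F_L^n) = 0$ together with Lemma \ref{lem:F_invariants} forces $(F_L^n)^G = F_K^{1 + \lfloor \frac{n-1}{p} \rfloor} = N_G^{\mathcal{F}}(F_L^n)$, so all inclusions collapse to equalities, giving $m = \lfloor \frac{n-1}{p} \rfloor$; as in \eqref{eqn:m-inequality} this yields $1 + \lfloor \frac{n-2}{p} \rfloor = \lfloor \frac{n-1}{p}\rfloor$, i.e. $n \equiv 1 \bmod p = g_1$. Finally, using the explicit formula for $\phi$ in the special situation $G = G_0 = \dots = G_t$, the equation $\lfloor \frac{n-1}{p}\rfloor = t + \lfloor \frac{n-1-t}{p}\rfloor$ (which holds since $n - 1 > t$) is possible only when $t = 1$, so $G_2 = 1$ and $L/K$ is weakly ramified.

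I expect the main obstacle to be the second inclusion in the displayed chain, namely the formal-group analogue of \cite[Chapter V, \S 6, Proposition 8]{MR554237} asserting $N_G^{\mathcal{F}}(F_L^{\psi(m)+1}) \subseteq F_K^{m+1}$ for $L/K$ not necessarily of prime degree. Corollary \ref{cor:F-inclusions} provides exactly the two inclusions $N_G^{\mathcal{F}}(F_L^{\psi(n)}) \subseteq F_K^n$ and $N_G^{\mathcal{F}}(F_L^{\psi(n)+1}) \subseteq F_K^{n+1}$ for arbitrary totally ramified $L/K$, so in fact it applies directly once we have reduced to the totally ramified case — which we have. So the argument should go through cleanly, and the proof reduces to a short assembly of Propositions \ref{prop:F-tamely-ramified}, \ref{prop:F-weakly-I}, Corollary \ref{cor:F-inclusions}, and Lemma \ref{lem:F_invariants}, together with the same elementary ramification-theoretic bookkeeping used in \S\ref{sec:principal}.
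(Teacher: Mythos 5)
Your overall route is exactly the paper's: the ``if'' direction is Proposition \ref{prop:F-weakly-I}, and the ``only if'' direction is obtained by rerunning the proof of Proposition \ref{prop:ct-implies-weak} with $U_L^n$ replaced by $F_L^n$, using Lemma \ref{lem:F_invariants} in place of Corollary \ref{cor:units-invariants} and Corollary \ref{cor:F-inclusions} in place of the two citations to Serre. The worry you raise about the second inclusion is resolved for exactly the reason you give: after the reduction to $G=G_1$ the extension is totally ramified (since $G_1\subseteq G_0$), and Corollary \ref{cor:F-inclusions} is stated for arbitrary totally ramified Galois extensions, not only for prime degree, so it supplies $N_G^{\mathcal F}(F_L^{\psi(m)+1})\subseteq F_K^{m+1}$ directly. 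This is precisely how the paper argues.

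There is, however, one slip in your bookkeeping: you pass to a subgroup of order $p$ (so $k=1$, cyclic of degree $p$) \emph{before} running the chain of norm inclusions, and then conclude ``$n\equiv 1\bmod p=g_1$''. For the original extension $g_1=p^k$, and cohomological triviality over an order-$p$ subgroup only yields $n\equiv 1\bmod p$, which is strictly weaker than the statement of the theorem when $k\geq 2$ (weakly ramified extensions with $|G_1|=p^k$, $k\geq 2$, do occur, so the stronger congruence has genuine content). The repair is simply to keep the order of reductions used in Proposition \ref{prop:ct-implies-weak}: first work with the full group $G=G_1$ of order $p^k$ (totally ramified, so Corollary \ref{cor:F-inclusions} still applies), run the chain with $p^k$ in the denominators to get $m=\lfloor\frac{n-1}{p^k}\rfloor$ and hence $1+\lfloor\frac{n-2}{p^k}\rfloor=\lfloor\frac{n-1}{p^k}\rfloor$, i.e.\ $n\equiv 1\bmod g_1$; only afterwards pass to an order-$p$ subgroup of $G_t$ (using $H_i=H\cap G_i$) to run the explicit-$\phi$ computation forcing $t=1$ and hence $G_2=1$. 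With that reordering your argument coincides with the paper's proof.
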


\begin{proof}
	Suppose that $n>1$ is an integer such that $F_L^n$ is
	cohomologically trivial. Then the same reasoning as in the proof 
	of Proposition \ref{prop:ct-implies-weak} using
	Lemma \ref{lem:F_invariants} and Corollary
	\ref{cor:F-inclusions} shows that $L/K$ is weakly
	ramified and $n \equiv 1 \mod g_1$.
	The converse also holds by Proposition 
	\ref{prop:F-weakly-I}.
\end{proof}

\begin{remark}
	It is in general \emph{not} true that $F_L^1$ is cohomologically
	trivial if and only if $L/K$ is tamely ramified.
	In fact, even in the case $\mathcal{F} = \mathbb{G}_a$ Köck's
	result \cite[Theorem 1.1]{MR2089083} shows that $\mathbb{G}_a(\mathfrak P_L) = \mathfrak P_L$
	is cohomologically trivial if and only if $L/K$ is weakly
	ramified. We now give a second example.
\end{remark}

\begin{example}
	We denote the absolute Galois group of $\mathbb Q_p$ by $G_{\mathbb Q_p}$
	and let $\chi:G_{\mathbb Q_p} \rightarrow \Z_p^{\times}$ be
	an unramified character. Let $K / \Q_p$ be an unramified extension
	and put $\pi := p \chi(\varphi)$, where $\varphi$ denotes the absolute 
	Frobenius of $\Q_p$. We consider the Lubin--Tate formal group
	$\mathcal{F} = \mathcal{F}_{\pi}$ over $\mathcal{O}_K$.
	Now let $L/K$ be a finite Galois extension with Galois group $G$
	and suppose that the absolute Frobenius $\varphi_L$
	of $L$ does not belong to the kernel of $\chi$.
	We put $\omega_L := v_{\Q_p}(1 - \chi(\varphi_L))$ and observe
	that $\omega_L \geq 0$. Then by 
	\cite[Lemma 4.1.1 and Theorem 4.2.3]{epsilon} there is 
	a $\Z_p[G]$-module $I$ and an
	exact sequence of $\Z_p[G]$-modules
	\[
		0 \longrightarrow F_L^1 \longrightarrow I
		\longrightarrow I \longrightarrow \Z / p^{\omega_L} \Z (\chi)
		\longrightarrow 0.
	\]
	The $\Z_p[G]$-module $I$ is cohomologically trivial by
	\cite[Lemma 4.1.2]{epsilon}. 
	Now suppose that $L/K$ is weakly and wildly ramified.
	Then $G_1 \not= 1$ and we have isomorphisms
	\[
		H^i(G_1, F_L^1) \simeq H^{i-2}(G_1,\Z / p^{\omega_L} \Z)
	\]
	for every $i \in \Z$. It follows that $F_L^1$ is cohomologically
	trivial if and only if $\omega_L = 0$.
\end{example}

\section{Applications} \label{sec:apps}

\subsection{Elliptic curves}

Let $L/K$ be a finite Galois extension of local fields of characteristic
$0$ and let $E/K$ be an elliptic curve given by a minimal Weierstraß equation
\[
	E: y^2 + a_1 xy + a_3 y = x^3 + a_2 x^2 + a_4 x + a_6
\]
with discriminant $\Delta \in \mathcal{O}_K$.
We denote the reduction of $E$ by $\overline{E}$ which is a not
necessarily smooth curve over the residue field $\kappa$.
We denote by $\overline E_{ns}(\lambda)$ the subset of
$\overline E(\lambda)$ comprising all non-singular points.
We let $E_0(L)$ be the set of  $L$-rational points of $E$
which have non-singular reduction. 
By \cite[Chapter VII, Proposition 2.1]{MR2514094}
one then has a short exact sequence
\begin{equation} \label{eqn:elliptic-curve-ses}
	0 \longrightarrow E_1(L) \longrightarrow E_0(L) \longrightarrow
	\overline{E}_{ns}(\lambda) \longrightarrow 0.
\end{equation}

\begin{prop} \label{prop:E-unramified}
	Suppose that $L/K$ is unramified and that $E/K$ has good reduction.
	Then the set $E(L)$ of $L$-rational points is a cohomologically trivial
	$G$-module.
\end{prop}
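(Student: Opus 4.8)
The plan is to leverage the reduction exact sequence \eqref{eqn:elliptic-curve-ses}. Since $E/K$ has good reduction, the reduced curve $\overline{E}$ is smooth over $\kappa$, so every $L$-rational point of $E$ has non-singular reduction; hence $E_0(L) = E(L)$ and $\overline{E}_{ns}(\lambda) = \overline{E}(\lambda)$, and \eqref{eqn:elliptic-curve-ses} specializes to a short exact sequence of $G$-modules
\[
	0 \longrightarrow E_1(L) \longrightarrow E(L) \longrightarrow \overline{E}(\lambda) \longrightarrow 0.
\]
I would show that the two outer terms are cohomologically trivial, and then conclude the same for $E(L)$ from the long exact sequence in Tate cohomology: for every subgroup $U \leq G$, the vanishing of $H^q(U, E_1(L))$ and of $H^q(U, \overline{E}(\lambda))$ for all $q \in \Z$ forces $H^q(U, E(L)) = 0$ for all $q \in \Z$.

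For the left-hand term, recall that $E_1(L) \simeq E_L^1 = \hat{E}(\mathfrak{P}_L)$, where $\hat{E}$ denotes the formal group of $E$ over $\mathcal{O}_K$. Since $L/K$ is unramified it is in particular tamely ramified, so Proposition \ref{prop:F-tamely-ramified} (applied to $\mathcal{F} = \hat{E}$) shows that $E_1(L)$ is cohomologically trivial.

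For the right-hand term, the hypothesis that $L/K$ is unramified identifies $G$ with the cyclic group $\Gal(\lambda/\kappa)$ acting on the \emph{finite} module $\overline{E}(\lambda)$. A finite module over a finite cyclic group is cohomologically trivial as soon as $H^1$ of every subgroup vanishes, because the cohomology of a cyclic group is $2$-periodic and the Herbrand quotient of a finite module is trivial. So it suffices to show $H^1(U, \overline{E}(\lambda)) = 0$ for each subgroup $U \leq G$. Writing $U = \Gal(\lambda/\kappa')$ for the corresponding intermediate field $\kappa \subseteq \kappa' \subseteq \lambda$, the inflation--restriction sequence gives an injection $H^1(\Gal(\lambda/\kappa'), \overline{E}(\lambda)) \hookrightarrow H^1(\Gal(\overline{\kappa}/\kappa'), \overline{E}(\overline{\kappa}))$, and the target vanishes by Lang's theorem, as $\overline{E}$ is a smooth connected algebraic group over the finite field $\kappa'$ (this is precisely the ingredient underlying Mazur's original result \cite{MR0444670}). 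Hence $\overline{E}(\lambda)$ is cohomologically trivial.

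Combining the two vanishings with the long exact sequence then gives the claim. The only genuinely non-formal input is the cohomological triviality of $\overline{E}(\lambda)$, which rests on Lang's theorem; the remaining work is bookkeeping, reducing the statement to the already-proved formal-group result (Proposition \ref{prop:F-tamely-ramified}) via the reduction exact sequence.
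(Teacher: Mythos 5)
Your argument is correct and follows essentially the same route as the paper: the reduction sequence \eqref{eqn:elliptic-curve-ses} with $E_0(L)=E(L)$ and $\overline{E}_{ns}(\lambda)=\overline{E}(\lambda)$, Proposition \ref{prop:F-tamely-ramified} for $E_1(L)\simeq E_L^1$, and the cohomological triviality of $\overline{E}(\lambda)$. The only difference is cosmetic: the paper cites Lang's result directly for the last ingredient, whereas you rederive it from Lang's theorem via inflation--restriction, periodicity for cyclic groups and the triviality of the Herbrand quotient of a finite module, which is a valid expansion of the same input.
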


\begin{proof}
	We observe
	that we have $E_0(L) = E(L)$ and 
	$\overline{E}_{ns}(\lambda) = \overline{E}(\lambda).$
	Now $E_L^1$ is cohomologically trivial by Proposition
	\ref{prop:F-tamely-ramified}, whereas $\overline{E}(\lambda)$
	is cohomologically trivial by \cite[Proposition 3]{MR0086367}.
	Moreover, we have $E_L^1 \simeq E_1(L)$ by
	\cite[Chapter VII, Proposition 2.2]{MR2514094}.
	Now the result follows from sequence \eqref{eqn:elliptic-curve-ses}.
\end{proof}

\begin{remark}
	If $L/K$ is unramified and $E$ has good reduction, then
	Proposition \ref{prop:E-unramified} in particular implies
	that the norm map $E(L) \rightarrow E(K)$ is surjective.
	This is a classical result of Mazur \cite[Corollary 4.4]{MR0444670}.
\end{remark}

\begin{prop}
	Suppose that $L/K$ is unramified and that $E/K$ has additive reduction. Then the set $E_0(L)$ is a cohomologically trivial
	$G$-module. If in addition the order of $G$ is prime to $6$,
	then also $E(L)$ is a cohomologically trivial $G$-module.
\end{prop}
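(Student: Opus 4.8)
The plan is to follow the d\'evissage used in the proof of Proposition~\ref{prop:E-unramified}, combining the short exact sequence \eqref{eqn:elliptic-curve-ses} of $G$-modules with the long exact sequences of Tate cohomology. For the first assertion, since $L/K$ is unramified it is in particular tamely ramified, so $E_L^1$ is cohomologically trivial by Proposition~\ref{prop:F-tamely-ramified}; via the $G$-equivariant isomorphism $E_1(L)\simeq E_L^1$ of \cite[Chapter~VII, Proposition~2.2]{MR2514094} the same holds for $E_1(L)$. It therefore remains to treat $\overline E_{ns}(\lambda)$. Because $E/K$ has additive reduction, the reduced curve $\overline E$ has a cusp, so $\overline E_{ns}$ is isomorphic to $\mathbb G_a$ as an algebraic group over $\kappa$ (see \cite[Chapter~VII, Proposition~5.1]{MR2514094}); hence $\overline E_{ns}(\lambda)\simeq \lambda$ as $G$-modules, where $G$ acts on the additive group $\lambda$ through the identification $G\simeq\Gal(\lambda/\kappa)$ afforded by $L/K$ being unramified. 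By the normal basis theorem $\lambda$ is, as a $\Z[G]$-module, induced from the trivial subgroup, and it remains so after restriction to any subgroup $U\leq G$; thus its Tate cohomology vanishes identically, i.e.\ $\overline E_{ns}(\lambda)$ is cohomologically trivial. Feeding the cohomological triviality of $E_1(L)$ and of $\overline E_{ns}(\lambda)$ into the long exact Tate cohomology sequence of \eqref{eqn:elliptic-curve-ses} over each subgroup $U\leq G$ gives $H^q(U,E_0(L))=0$ for all $q$, so $E_0(L)$ is a cohomologically trivial $G$-module.

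For the second assertion I would use the short exact sequence of $G$-modules
\[
	0\longrightarrow E_0(L)\longrightarrow E(L)\longrightarrow M\longrightarrow 0,\qquad M:=E(L)/E_0(L).
\]
The group $M$ is finite, and since $E/K$ has additive reduction it embeds into the component group of the N\'eron model, which by the Kodaira--N\'eron classification is one of $0$, $\Z/2\Z$, $\Z/3\Z$, $\Z/4\Z$, $(\Z/2\Z)^2$; in particular the exponent of $M$ is divisible only by the primes $2$ and $3$. Now suppose $|G|$ is prime to $6$. Then for every subgroup $U\leq G$ the integers $|U|$ and $\exp(M)$ are coprime, and since $H^q(U,M)$ is annihilated both by $|U|$ and by $\exp(M)$ it vanishes for all $q$. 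Hence $M$ is cohomologically trivial, and combining this with the first assertion via the long exact Tate cohomology sequence of the displayed sequence shows that $E(L)$ is cohomologically trivial.

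I expect the only genuinely delicate point to be the input from the geometry of the reduction: one must know that $\overline E_{ns}$ is the split additive group $\mathbb G_a$ over $\kappa$ (and not a nontrivial form of it) and that the resulting isomorphism $\overline E_{ns}(\lambda)\simeq\lambda$ is $G$-equivariant; once this is in hand the remainder is a formal d\'evissage entirely parallel to the good-reduction case. A secondary point is to cite a sufficiently precise statement about $E(L)/E_0(L)$ in the additive case — any version asserting that it is finite of order dividing $4$, or equal to $3$, and hence of exponent involving only the primes $2$ and $3$, suffices for the coprimality argument.
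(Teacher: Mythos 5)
Your proof is correct and follows essentially the same route as the paper: cohomological triviality of $E_1(L)\simeq E_L^1$ via Proposition \ref{prop:F-tamely-ramified}, the identification $\overline E_{ns}(\lambda)\simeq\lambda$ (cohomologically trivial since $L/K$ is unramified), the d\'evissage through sequence \eqref{eqn:elliptic-curve-ses}, and then the Kodaira--N\'eron bound on $E(L)/E_0(L)$ combined with the coprimality of $|G|$ and $6$. You merely spell out the details the paper leaves implicit (normal basis theorem, the split form of $\mathbb G_a$ over the perfect field $\kappa$, and the annihilation argument for the component group), all of which are accurate.
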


\begin{proof}
	 We know that $E_L^1 \simeq E_1(L)$ is cohomologically trivial by Proposition
	 \ref{prop:F-tamely-ramified}. 
	 Likewise $\overline{E}_{ns}(\lambda) \simeq \lambda$
	 is cohomologically trivial, as $L/K$ is unramified. It follows
	 from sequence \eqref{eqn:elliptic-curve-ses} that
	 $E_0(L)$ is cohomologically trivial. As the index of $E_0(L)$ in
	 $E(L)$ is at most $4$ by the theorem of Kodaira and Néron
	 \cite[Chapter VII, Theorem 6.1]{MR2514094}, the final claim is also clear.
\end{proof}

\begin{prop}
	Suppose that $L/K$ is unramified and that $E/K$ has split
	multiplicative reduction.
	Then the set $E_0(L)$ is a cohomologically trivial
	$G$-module. If in addition the order of $G$ is prime to
	$v_K(\Delta)$, then also $E(L)$ is a cohomologically trivial $G$-module.
\end{prop}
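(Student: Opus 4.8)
The plan is to mirror the structure of the two preceding propositions on good and additive reduction, the only new ingredient being the structure of the component group in the split multiplicative case. First I would recall that, since $E/K$ has split multiplicative reduction, the group of nonsingular points of the reduction is $\overline{E}_{ns}(\lambda) \simeq \lambda^{\times}$, the multiplicative group of the residue field, and the component group $E(L)/E_0(L)$ is cyclic of order $v_L(\Delta) = v_K(\Delta)$ (the ramification index is $1$ since $L/K$ is unramified) by the theory of the N\'eron model, see \cite[Chapter VII, Theorem 6.1]{MR2514094}.

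Next I would argue that $E_0(L)$ is cohomologically trivial. As in the previous proofs we have the exact sequence \eqref{eqn:elliptic-curve-ses}, and $E_L^1 \simeq E_1(L)$ is cohomologically trivial by Proposition \ref{prop:F-tamely-ramified} since $L/K$ is unramified, hence in particular tamely ramified. The quotient $\overline{E}_{ns}(\lambda) \simeq \lambda^{\times}$ is cohomologically trivial as a $G = \Gal(\lambda/\kappa)$-module: indeed $\lambda^{\times}$ is the unit group of the unramified extension $\lambda/\kappa$ of finite fields, and its cohomology vanishes for every subgroup by the cohomological triviality of units in unramified local (or here finite) extensions --- concretely $H^i(G,\lambda^{\times}) = 0$ for all $i$ because $\lambda^{\times}$ has a normal basis and its Herbrand quotient is $1$ with vanishing $H^{-1}$, or one simply invokes Hilbert 90 together with surjectivity of the norm on finite fields. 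Hence the extension $E_0(L)$ of two cohomologically trivial $G$-modules is cohomologically trivial.

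For the final claim, suppose $|G|$ is prime to $v_K(\Delta)$. Then from the exact sequence
\[
	0 \longrightarrow E_0(L) \longrightarrow E(L) \longrightarrow E(L)/E_0(L) \longrightarrow 0
\]
and the fact that $E(L)/E_0(L)$ is finite cyclic of order $v_K(\Delta)$, which is prime to $|G|$, the module $E(L)/E_0(L)$ has order coprime to $|G|$ and is therefore cohomologically trivial (its cohomology over any subgroup $U$ of $G$ is annihilated both by $|U|$ and by $v_K(\Delta)$, hence vanishes). Taking the long exact sequence in Tate cohomology of the displayed sequence then shows $H^i(U, E(L)) \simeq H^i(U, E_0(L)) = 0$ for every $i \in \Z$ and every subgroup $U$, so $E(L)$ is cohomologically trivial.

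The one point deserving care --- and the main potential obstacle --- is the precise description of the $G$-module structure on the component group $E(L)/E_0(L)$ and on $\overline{E}_{ns}(\lambda)$ in the split multiplicative case: one must make sure the identification $\overline{E}_{ns}(\lambda) \simeq \lambda^{\times}$ is $G$-equivariant (this uses that the reduction is \emph{split}, so the two tangent directions at the node are rational and no quadratic twist intervenes), and that the component group, being cyclic of order $v_K(\Delta)$, is acted on by $G$ through a finite cyclic group of order coprime to $|G|$. Once these structural facts are in hand, the cohomological argument is a routine two-step d\'evissage exactly as above.
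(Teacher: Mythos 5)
Your argument is correct and is essentially the paper's own proof: cohomological triviality of $E_1(L)\simeq E_L^1$ via Proposition \ref{prop:F-tamely-ramified}, of $\overline{E}_{ns}(\lambda)\simeq\lambda^{\times}$ in the unramified situation, then the sequence \eqref{eqn:elliptic-curve-ses} for $E_0(L)$, and finally the Kodaira--N\'eron description of $E(L)/E_0(L)$ as cyclic of order $v_L(\Delta)=v_K(\Delta)$ together with the coprimality assumption for $E(L)$. Your added remarks on the $G$-equivariance of the identification $\overline{E}_{ns}(\lambda)\simeq\lambda^{\times}$ (using that the reduction is split) are a reasonable precision but do not change the route.
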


\begin{proof}
	The first claim again follows from Proposition \ref{prop:F-tamely-ramified}
	and sequence \eqref{eqn:elliptic-curve-ses} once we observe
	that $\overline{E}_{ns}(\lambda) \simeq \lambda^{\times}$
	is cohomologically trivial. Likewise, the final claim again follows
	from the theorem of Kodaira and Néron
	\cite[Chapter VII, Theorem 6.1]{MR2514094} which says that
	in this case $E(L) / E_0(L)$ is a cyclic group of order
	$v_L(\Delta) = v_K(\Delta)$.
\end{proof}

\subsection{Ray class groups}

In this section we let $L/K$ be a finite Galois extension of number fields
with Galois group $G$. If $\mathfrak{m}$ is an ideal of the ring
of integers $\mathcal{O}_L$ in $L$, we write $\cl_L^{\mathfrak m}$
for the ray class group of $L$ to the ray $\mathfrak m$.
We say that the modulus $\mathfrak m$ is $G$-equivariant
if $\sigma(\mathfrak m) = \mathfrak m$ for every $\sigma \in G$.
In this case the ray class group $\cl_L^{\mathfrak m}$ is endowed with
a natural $G$-action.

Now suppose that $L/K$ is a CM-extension, so $K$ is totally real,
$L$ is totally complex and complex conjugation induces a unique
automorphism $j \in G$ which is indeed central in $G$. If $M$
is a $G$-module, we denote by $M^+$ and $M^-$ the submodules of $M$
upon which $j$ acts by $+1$ and $-1$, respectively.
When $\mathfrak m$ is $G$-equivariant, we put 
$A_L^{\mathfrak m} := (\cl_L^{\mathfrak m})^-$.

Now let $p$ be a prime. For every prime $\mathfrak p$ in $K$ above $p$
we choose a prime $\mathfrak P$ in $L$ above $\mathfrak p$.
We say that $L/K$ is weakly ramified above $p$ if the
local extensions $L_{\mathfrak P} / K_{\mathfrak p}$ are weakly ramified
for all primes $\mathfrak p$ in $K$ above $p$. We let $e_{\mathfrak p}$ be the
ramification index of the extension $L_{\mathfrak P} / K_{\mathfrak p}$
which does not depend on the choice of the prime $\mathfrak P$.
We write $e_{\mathfrak p} = p^{k_{\mathfrak p}} e_{\mathfrak p}'$, where $k_{\mathfrak p}$ and $e_{\mathfrak p}'$ are integers
such that $e_{\mathfrak p}'$ is not divisible by $p$.

\begin{theorem} \label{thm:ray-class-ct}
	Let $L/K$ be a Galois CM-extension of number fields with Galois group $G$.
	Let $p$ be an odd prime and suppose that $L/K$ is weakly ramified
	above $p$. Choose a $G$-equivariant modulus 
	\[
		\mathfrak m  = \prod_{\mathfrak P } \mathfrak P^{n_{\mathfrak P}}
	\]
	such that:
	\begin{enumerate}
		\item 
		if $\mathfrak p$ is a prime that ramifies in $L/K$,
		then $\mathfrak P$ divides $\mathfrak m$;
		\item
		we have $n_{\mathfrak P} \equiv 1 \mod p^{k_{\mathfrak p}}$
		and $n_{\mathfrak P} \not= 1$ for every prime $\mathfrak P$ 
		above $p$;
		\item
		if $\zeta$ is a root of unity in $L$ such that 
		$\zeta \equiv 1 \mod \mathfrak m$, then $\zeta =1$.
	\end{enumerate}
	Then the $\Z_p[G]$-module $A_L^{\mathfrak m} \otimes_{\Z} \Z_p$ is
	cohomologically trivial.
\end{theorem}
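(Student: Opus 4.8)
The plan is to realise $A_L^{\mathfrak{m}}\otimes_{\Z}\Z_p$ as a subquotient of two cohomologically trivial $\Z_p[G]$-modules, using the idèlic description of ray class groups together with Theorem \ref{thm:units-ct}. Write $C_L$ for the idèle class group of $L$, and for a $\Z[G]$-module $M$ put $M_p^{-}:=(M\otimes_{\Z}\Z_p)^{-}$; since $p$ is odd, $e^{-}=\tfrac{1-j}{2}$ is a central idempotent of $\Z_p[G]$, so $M\mapsto M_p^{-}$ is an exact functor.

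First I would record the exact sequence of $\Z[G]$-modules
\[
0\longrightarrow E^{\mathfrak{m}}\longrightarrow U^{\mathfrak{m}}\longrightarrow C_L\longrightarrow \cl_L^{\mathfrak{m}}\longrightarrow 0 ,
\]
where $U^{\mathfrak{m}}=\prod_{v\mid\infty}L_v^{\times}\times\prod_{v\nmid\mathfrak{m}\text{ finite}}\mathcal{O}_{L_v}^{\times}\times\prod_{v\mid\mathfrak{m}}U_{L_v}^{n_v}$ is the group of $\mathfrak{m}$-adapted idèlic units, $E^{\mathfrak{m}}=U^{\mathfrak{m}}\cap L^{\times}$ is the group of global units congruent to $1$ modulo $\mathfrak{m}$, and $\cl_L^{\mathfrak{m}}=C_L/\overline{U^{\mathfrak{m}}}$ by global class field theory. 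Applying the exact functor $(-)_p^{-}$ and splicing, the problem reduces to three claims: $(E^{\mathfrak{m}})_p^{-}=0$, and $(U^{\mathfrak{m}})_p^{-}$ and $(C_L)_p^{-}$ are cohomologically trivial. Indeed the first collapses the four-term sequence to $0\to (U^{\mathfrak{m}})_p^{-}\to (C_L)_p^{-}\to A_L^{\mathfrak{m}}\otimes\Z_p\to 0$, and then triviality of the two outer terms forces that of $A_L^{\mathfrak{m}}\otimes\Z_p$. Here $(E^{\mathfrak{m}})_p^{-}=0$ is immediate from condition (iii): $E^{\mathfrak{m}}$ is then torsion-free and of the same rank as $\mathcal{O}_L^{\times}$, so $(E^{\mathfrak{m}})_p^{-}$ is at once torsion (because $(\mathcal{O}_L^{\times}\otimes\Q_p)^{-}=0$, a standard property of CM fields) and torsion-free.

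Cohomological triviality of $(U^{\mathfrak{m}})_p^{-}$ I would check place by place. For a finite group $H$, Tate cohomology commutes with products, and by Shapiro and the Mackey formula $H^i(H,\mathrm{Ind}_{G_w}^{G}N_w)$ is a finite direct sum of groups $H^i(H\cap{}^{g}G_w,{}^{g}N_w)$. The archimedean factors give groups annihilated by $2$; the factors at finite $v\nmid\mathfrak{m}$ vanish, since such $v$ are unramified in $L/K$ by (i) and $\mathcal{O}_{L_v}^{\times}$ is cohomologically trivial in an unramified extension; the factors at $v\mid\mathfrak{m}$ with $v\nmid p$ give $\ell$-primary groups with $\ell\neq p$, because $U_{L_v}^{n_v}$ is then pro-$\ell$; and the factors at $v\mid\mathfrak{m}$ with $v\mid p$ vanish, because $U_{L_v}^{n_v}$ is cohomologically trivial over the decomposition group by Theorem \ref{thm:units-ct} — its hypotheses hold since $L_v/K_v$ is weakly ramified (as $L/K$ is weakly ramified above $p$) and, by (ii), $n_v>1$ with $n_v\equiv 1\bmod p^{k_{\mathfrak{p}}}=g_1(L_v/K_v)$. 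Only finitely many places contribute, so $H^i(H,U^{\mathfrak{m}})$ is finite of order prime to $p$; hence $H^i(H,U^{\mathfrak{m}}\otimes\Z_p)=H^i(H,U^{\mathfrak{m}})\otimes\Z_p=0$, and since $(U^{\mathfrak{m}})_p^{-}$ is a direct summand of $U^{\mathfrak{m}}\otimes\Z_p$ the same vanishing holds for it.

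The decisive point is the cohomological triviality of $(C_L)_p^{-}$, which in fact holds for every CM extension and every odd $p$, with no ramification hypothesis. For $H\leq G$ one has $H^i(H,(C_L)_p^{-})=e^{-}\bigl(H^i(H,C_L)\otimes\Z_p\bigr)$, so it suffices to show that $j$ acts trivially on $H^i(H,C_L)$ for all $i$. Since $(G,C_L)$ is a class formation, cup product with the fundamental class $u_{L/L^{H}}\in H^2(H,C_L)$ induces isomorphisms $H^i(H,\Z)\xrightarrow{\ \sim\ }H^{i+2}(H,C_L)$; the central element $j$ acts trivially on the source, acts on the target through $(j_{C_L})_{*}$, and fixes $u_{L/L^{H}}$ because the invariant map is canonical and compatible with the permutation of places induced by $j$. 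Hence $j$ acts trivially on the target as well, $e^{-}$ annihilates $H^i(H,C_L)\otimes\Z_p$, and $(C_L)_p^{-}$ is cohomologically trivial. The main obstacle is therefore not a single computation but the bookkeeping of this step — marshalling the fundamental class, Tate--Nakayama, and the $j$-invariance of the global canonical class — together with the verification in the second step that the discrepancy between $\cl_L^{\mathfrak{m}}$ and the global objects $E^{\mathfrak{m}}$, $U^{\mathfrak{m}}$, $C_L$ is supported precisely at the primes dividing $\mathfrak{m}$, where Theorem \ref{thm:units-ct} applies.
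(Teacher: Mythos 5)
Your argument is correct and is essentially the proof the paper has in mind: the paper simply defers to \cite[Theorem 1]{MR2805422}, whose strategy is exactly your idèlic one — the four-term sequence relating $E^{\mathfrak m}$, $U^{\mathfrak m}$, $C_L$ and $\cl_L^{\mathfrak m}$, vanishing of the minus part of the units by condition (iii), cohomological triviality of the local factors (with Theorem \ref{thm:units-ct} replacing the tame case at the primes above $p$, using $g_1(L_{\mathfrak P}/K_{\mathfrak p})=p^{k_{\mathfrak p}}$ and condition (ii)), and triviality of $(C_L)^-_p$ via Tate--Nakayama and the $j$-invariance of the fundamental class. So you have supplied, correctly, the details that the paper leaves to the cited reference.
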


\begin{proof}
	Using Theorem \ref{thm:units-ct}
	this may be proved along the lines of \cite[Theorem 1]{MR2805422}.
\end{proof}

\begin{remark}
	Condition (iii) on the modulus $\mathfrak m$ is satisfied
	when $\mathfrak m$ is divisble by at least two primes
	with different residue characteristic.
	In particular, whenever $L/K$ is weakly ramified above $p$,
	there exists a modulus $\mathfrak m$ with the above properties.
\end{remark}

\begin{remark}
	When $L/K$ is tamely ramified above $p$, a variant of this result
	has been established by the second author \cite[Theorem 1]{MR2805422}.
	This was an essential step in the proof of (the minus part of)
	the equivariant Tamagawa number conjecture for certain
	tamely ramified CM-extensions \cite{MR3552493}.
\end{remark}

\begin{remark}
	In characteristic $0$ it is easy to prove Theorem \ref{thm:units-ct}
	when we choose $n$ sufficiently large.
	However, in order to generalize the aforementioned results on
	the equivariant Tamagawa number conjecture to weakly ramified
	extensions, one most likely has to apply Theorem \ref{thm:ray-class-ct}
	for infinitely many Galois extensions $L_m/K_m$, $m \in \N$,
	where $L_m$ denotes the $m$-th layer in the cyclotomic $\Z_p$-extension
	of $L$. Moreover, one has to choose compatible moduli for each layer
	$m$ and this is only possible when we take the full strength of
	Theorem \ref{thm:ray-class-ct} (and thus of Theorem \ref{thm:units-ct})
	into account.
\end{remark}

\nocite*
\bibliography{formal-groups-bib}{}
\bibliographystyle{amsalpha}

\end{document}